\theoremstyle{plain}
\newtheorem{theorem}{Theorem}
\newtheorem{lemma}[theorem]{Lemma}
\newtheorem{claim}[theorem]{Claim}
\newtheorem{corollary}[theorem]{Corollary}
\newtheorem{observation}[theorem]{Observation}
\newtheorem{definition}[theorem]{Definition}
\renewcommand{\SS}{\mathcal{S}}
\newcommand{\BB}{\mathcal{B}}
\newcommand{\TT}{\mathcal{T}}
\newcommand{\CC}{\mathcal{C}}
\newcommand{\KK}{\mathcal{K}}
\begin{document}
\title{Characterization of $4$-critical triangle-free toroidal graphs\footnote{Supported by the Neuron Foundation for Support of Science under Neuron Impuls programme.}}
\author{%
     Zdeněk Dvořák\thanks{Computer Science Institute (CSI) of Charles University,
           Malostranské náměstí 25, 118 00 Prague, 
           Czech Republic. E-mail: \protect\href{mailto:rakdver@iuuk.mff.cuni.cz}{\protect\nolinkurl{rakdver@iuuk.mff.cuni.cz}}.
	   }
    \and
    Jakub Pekárek\thanks{Charles University,
           Malostranské náměstí 25, 118 00 Prague, 
           Czech Republic. E-mail: \protect\href{mailto:edalegos@gmail.com}{\protect\nolinkurl{edalegos@gmail.com}}.}
}
\date{\today}
\maketitle

\begin{abstract}
We give an exact characterization of 3-colorability of triangle-free graphs
drawn in the torus, in the form of 186 ``templates'' (graphs with certain
faces filled by arbitrary quadrangulations) such that a graph from this class
is not 3-colorable if and only if it contains a subgraph matching one of the templates.
As a consequence, we show every triangle-free graph drawn in the torus
with edge-width at least six is 3-colorable, a key property used in an efficient
3-colorability algorithm for triangle-free toroidal graphs.
\end{abstract}

\section{Introduction}

By a fundamental theorem of Gr\"{o}tzsch~\cite{grotzsch1959}, every planar triangle-free
graph is 3-colorable.  This is not true for any surface of non-zero genus.
The only other surface to date for which the 3-colorability of triangle-free graphs is
completely understood is the projective plane; Gimbel and Thomassen~\cite{gimbel}
proved that a projective-planar triangle-free graph is 3-colorable if and only
if it does not contain a non-bipartite quadrangulation as a subgraph.

Recall a graph $G$ is \emph{k-critical} if its chromatic number is exactly $k$ and
every proper subgraph of $G$ has a chromatic number at most $k-1$, and thus a graph is
$c$-colorable if and only if it does not contain a $(c+1)$-critical subgraph.
Consequently, to characterize $c$-colorability in a subgraph-closed class of graphs,
it suffices to describe the $(c+1)$-critical graphs belonging to the class.
For example, the aforementioned result on projective-planar graphs can be restated as follows.
\begin{theorem}[Gimbel and Thomassen~\cite{gimbel}]
Projective-planar triangle-free $4$-critical graphs are exactly the
non-bipartite quadrangulations of the projective plane without non-facial contractible
4-cycles.
\end{theorem}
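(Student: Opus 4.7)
The plan is to prove the two implications separately, using Youngs' theorem on parity of projective-planar quadrangulations together with extension theorems for $3$-coloring triangle-free plane graphs from a precolored short cycle (Grötzsch's theorem and its descendants due to Aksenov, Borodin, and others).

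For the $(\Leftarrow)$ direction, let $G$ be a non-bipartite quadrangulation of the projective plane with no non-facial contractible $4$-cycle. Youngs' theorem states that a projective-planar quadrangulation is $3$-colorable iff it is bipartite, giving $\chi(G) \geq 4$. To prove criticality, I pick any edge $e = uv$ and $3$-color $G - e$: deleting $e$ merges the two $4$-faces incident with $e$ into a single hexagonal face $h$ (the no-non-facial-$4$-cycle hypothesis rules out the degenerate cases where the two faces share more than $e$). I would introduce an auxiliary vertex $w$ in $h$ joined to three alternating vertices of $\partial h$, producing a new triangle-free projective-planar quadrangulation $G^*$. I would then analyze $G^*$: either $G^*$ is bipartite, in which case restricting its $2$-coloring to $V(G)$ yields a $3$-coloring of $G - e$, or $G^*$ is non-bipartite and the resulting structure produces either a contradiction with the hypothesis on $G$ or a suitable contraction/restriction from which one still extracts a $3$-coloring of $G - e$.

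For the $(\Rightarrow)$ direction, let $G$ be triangle-free and $4$-critical in the projective plane. Non-bipartiteness is immediate from $\chi(G) = 4$. I must show every face is a $4$-cycle and no contractible $4$-cycle is non-facial. If $C$ is a non-facial contractible $4$-cycle, it bounds a closed disk $D$; the two subgraphs $G_D$ (contained in $D$ together with $C$) and $G \setminus \mathrm{int}(D)$ are both proper subgraphs, hence $3$-colorable by criticality of $G$. Extending along $C$ via the Aksenov/Borodin $3$-coloring extension theorems, I would reconcile the two colorings so they agree on $C$, producing a $3$-coloring of $G$ and contradicting $\chi(G) = 4$. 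The case of a face of length $\geq 6$ is handled analogously: delete one boundary edge $e$, $3$-color the resulting proper subgraph, and use the extension theorem to modify the coloring around the enlarged face so that $e$ can be reinserted.

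The main obstacle is managing the extension theorems in the necessity direction. The available extension results have explicit small obstructions in which a precoloring of a $4$-cycle fails to extend, and one must show either that such obstructions cannot arise here, or that their presence already contradicts $4$-criticality of $G$ by exhibiting a smaller non-$3$-colorable subgraph. Carrying out this case analysis cleanly, and verifying that the analogous ``merge'' step in the sufficiency direction genuinely produces a bipartite auxiliary quadrangulation when needed, is the technical heart of the argument.
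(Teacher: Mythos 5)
The paper does not prove this theorem: it is cited as background from Gimbel and Thomassen~\cite{gimbel}, so there is no internal proof to compare against. The paper in fact states only the easier restatement of their result in terms of $4$-critical graphs; the heavy lifting in~\cite{gimbel} is the theorem that a triangle-free projective-planar graph is $3$-colorable if and only if it contains no non-bipartite quadrangulation as a subgraph, from which the characterization you are asked to prove follows by short arguments.

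With that caveat, your sketch has the right ingredients but two genuine gaps. First, in the $(\Leftarrow)$ (sufficiency/criticality) direction, your auxiliary-quadrangulation trick only resolves the case where $G^*$ is bipartite; when $G^*$ is non-bipartite you gesture at ``further analysis'' without any mechanism for producing a $3$-coloring of $G-e$, and there is no obvious induction (the size does not drop, and the no-non-facial-$4$-cycle hypothesis need not persist in $G^*$). The standard route here is to invoke the Gimbel--Thomassen $3$-colorability criterion directly: if $G-e$ were not $3$-colorable it would contain a non-bipartite quadrangulation $Q\subsetneq G$, and some $4$-face of $Q$ would then be a non-facial contractible $4$-cycle of $G$, contradicting the hypothesis. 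Your approach avoids that big hammer but does not close the argument. Second, in the $(\Rightarrow)$ direction, the non-facial $4$-cycle case is essentially right (delete the interior, $3$-color the proper subgraph, extend across the disk by Aksenov/Gimbel--Thomassen), but the claim that ``the case of a face of length $\geq 6$ is handled analogously'' is not an argument: deleting one boundary edge of a long face gives a $3$-coloring of $G-e$ that may assign the same color to the endpoints of $e$, and ``modify the coloring so $e$ can be reinserted'' is exactly the statement you need to prove, not a step you can appeal to. You also omit $5$-faces entirely, which must be ruled out to conclude $G$ is a quadrangulation. A cleaner path to ``$G$ is a quadrangulation'' is again via the $3$-colorability criterion: $G$ contains a non-bipartite quadrangulation $Q$, and since $Q$ itself is non-$3$-colorable (Youngs) while $G$ is $4$-critical, $Q=G$.

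In short: your outline is a plausible first draft of what one would do from scratch, but the acknowledged ``technical heart'' is genuinely missing in both directions. The paper sidesteps all of this by citing~\cite{gimbel}, and the efficient way to carry out your plan is to first establish (or cite) the subgraph $3$-colorability criterion of Gimbel and Thomassen and then derive the $4$-critical characterization as a corollary, rather than attacking criticality edge-by-edge.
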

Let us remark that the situation is much simpler for graphs of girth at least five:
All such graphs drawn in the projective plane, the torus, or the Klein bottle
are $3$-colorable~\cite{thom-torus,tw-klein}, and in general, there are only finitely many
4-critical graphs of girth at least five that can be drawn in any fixed surface~\cite{thomassen-surf}.

The general theory of $3$-colorability of triangle-free graphs drawn on surfaces was
developed in a series of papers by Dvořák, Král' and Thomas, who in particular showed that
triangle-free 4-critical graphs are almost quadrangulations in the following sense.
The \emph{census} $\CC(G)$ of a graph $G$ drawn in a surface is the multiset of lengths of the faces of $G$, excluding the faces of length exactly four.
\begin{theorem}[Dvořák, Král' and Thomas~\cite{trfree4}]\label{thm-gen}
For every surface $\Sigma$, there exists $c_\Sigma$ such that the following claim holds.
If $G$ is a $4$-critical triangle-free graph drawn in $\Sigma$ without a non-contractible 4-cycle, then
$$\sum \CC(G)\le c_\Sigma.$$
\end{theorem}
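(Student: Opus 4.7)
The plan is to combine Euler's formula with a local precoloring-extension analysis for triangle-free graphs in the disk. Let $g$ denote the Euler genus of $\Sigma$. Assign each face $f$ of $G$ the charge $\omega(f)=\ell(f)-4$ and each vertex $v$ the charge $\omega(v)=\deg(v)-4$. Using $\sum_v\deg(v)=\sum_f\ell(f)=2|E|$ together with Euler's formula one gets
\begin{equation*}
\sum_{v\in V(G)}\omega(v)+\sum_{f\in F(G)}\omega(f)=-4\chi(\Sigma)=4(g-2).
\end{equation*}
Triangle-freeness forces $\ell(f)\ge 4$ for every face, so the face contribution is nonnegative and equals $\sum\CC(G)-4|\CC(G)|$. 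Since $G$ is $4$-critical, every vertex has degree at least $3$, so $\omega(v)\ge -1$; bounding $\sum\CC(G)$ therefore reduces to showing that $G$ can contain only boundedly many ``excess'' features (long faces and $3$-vertices) beyond a genuine quadrangulation of $\Sigma$.

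The core step is a local structure lemma for $4$-critical triangle-free graphs. Around each non-quadrangular face $f_0$ one chooses a short contractible cycle $C$ surrounding $f_0$; the hypothesis that $G$ has no non-contractible $4$-cycle ensures that small cycles in the neighbourhood of $f_0$ bound disks, so the subgraph enclosed by $C$ together with $f_0$ is planar. Because $G$ is $4$-critical, removing a suitable edge near $f_0$ gives a $3$-coloring whose restriction to $C$ does not extend into the disk enclosed by $C$. The classical Gimbel--Thomassen--DKT type precoloring extension theorem for triangle-free plane graphs with short boundary then provides a universally bounded obstruction: minimal subgraphs witnessing non-extendability have size at most some absolute constant. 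Each long face, and each local cluster of degree-$3$ vertices, thus carries a positive but bounded contribution to the Euler deficit $4(g-2)$.

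To globalize, I would use a covering argument: if $\sum\CC(G)$ were enormous, we would find either a single very long face or many disjoint non-quadrangular faces lying in pairwise-disjoint disk neighbourhoods (again using that $4$-cycles of $G$ are contractible, so they do not let defect ``wrap around'' $\Sigma$). Each such neighbourhood absorbs a fixed positive amount of face charge, contradicting the fact that the total signed charge is the constant $4(g-2)$. Summed up, this yields $\sum\CC(G)\le c_\Sigma$ for some $c_\Sigma$ depending only on the Euler genus of $\Sigma$.

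The main obstacle is the planar precoloring-extension ingredient: proving a uniform bound on the size of minimal subgraphs obstructing a $3$-coloring extension across a short boundary is precisely the technical heart of the Dvo\v{r}\'ak--Kr\'al'--Thomas programme, and translating it into the global bound above requires careful bookkeeping to avoid double-counting when several non-quadrangular faces lie near one another or share a cluster of degree-$3$ vertices.
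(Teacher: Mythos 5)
The paper does not prove this statement; it is cited verbatim from Dvo\v{r}\'ak, Kr\'al' and Thomas \cite{trfree4}, so there is no in-paper proof to compare against. Evaluating your sketch on its own merits, it is not a proof: the discharging setup you describe cannot close as written.

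The charge computation $\sum_v(\deg v-4)+\sum_f(\ell(f)-4)=4g-8$ is correct, but it only rewrites Euler's formula; it does not bound $\sum\CC(G)$, because the vertex term can be arbitrarily negative. In a $4$-critical triangle-free graph there is no a priori bound on the number of degree-$3$ vertices, so $\sum_v(\deg v-4)\ge -|V|$ is the best one gets from criticality alone, and the identity merely says the face excess is at most $4g-8+|V|$. The entire content of Theorem~\ref{thm-gen} is that the face excess is bounded \emph{independently of} $|V|$; your argument must therefore somehow show the degree-$3$ deficit is controlled by the face surplus, and that is exactly the step your sketch does not supply. The ``local structure lemma'' you invoke has two unsupported claims: first, that deleting a suitable nearby edge yields a $3$-coloring whose trace on the surrounding cycle $C$ fails to extend into the enclosed disk (criticality gives a $3$-coloring of $G-e$, but nothing guarantees its non-extendability across a particular $C$ you drew after the fact); second, that the obstruction has absolutely bounded size. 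The Gimbel--Thomassen/DKT precoloring results bound obstructions only for \emph{short} boundary cycles (essentially Lemma~\ref{lemma-sset} in this paper, valid for $k\le 9$), and one cannot surround an arbitrary non-quadrangular face or cluster of degree-$3$ vertices by a uniformly short contractible cycle. Finally, the ``covering argument'' concludes a contradiction from ``fixed positive face charge per neighbourhood,'' but this is offset by the negative vertex charge in that same neighbourhood; without a quantitative trade-off (the potential/weight analysis that occupies most of \cite{trfree4}), no contradiction arises.

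In short, you have the right ingredients in mind (Euler formula, criticality, local planarity via the absence of non-contractible $4$-cycles, precoloring extension in disks), but the argument that actually makes them cohere --- a carefully calibrated potential function showing that $4$-critical triangle-free graphs on surfaces without short non-contractible cycles are ``almost quadrangulations'' with a bounded number of exceptional vertices and faces --- is precisely the technical core of \cite{trfree4} and is absent here. As presented, the proposal is a plausible high-level roadmap but not a proof.
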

In combination with a detailed 3-colorability theory for such almost quadrangulations~\cite{trfree6},
Theorem~\ref{thm-gen} can be used to design a linear-time algorithm to decide 3-colorability
of triangle-free graphs drawn in a fixed surface~\cite{trfree7}.

Let us remark that the assumption the graph does not contain a non-contractible 4-cycle
in the statement of Theorem~\ref{thm-gen} is necessary in general; based on a construction
of Thomas and Walls~\cite{tw-klein}, for any surface $\Sigma$ of sufficiently large genus,
there exist $4$-critical triangle-free graphs drawn in $\Sigma$ with arbitrarily large census
(nevertheless, the structure of these graphs is well understood~\cite{cylgen-part3}).
However, for torus the assumption can be dropped.

\begin{theorem}[Dvořák and Pekárek~\cite{torirr}]\label{thm-centor}
Suppose $H$ is a triangle-free $4$-critical graph drawn in the torus. Then $H$ is $2$-connected, the drawing of $H$ is $2$-cell,
has representativity at least $2$, and its census is $\emptyset$, $\{5,5\}$, $\{5,5,5,5\}$, $\{5,5,6\}$, or $\{5,7\}$.
\end{theorem}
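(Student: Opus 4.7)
The plan is to proceed in three stages: establish the basic structural properties, bound the total face excess, and then narrow the census to the five listed multisets.

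\emph{Structure.} $2$-connectedness is immediate from $4$-criticality: if $v$ were a cut vertex, each block of $H$ would be $3$-colorable by minimality, and permuting colors to match at $v$ would give a $3$-coloring of $H$. For the embedding to be $2$-cell, note that a non-disk face of a $2$-connected graph on the torus must be an open annulus; cutting along its core non-contractible curve re-embeds $H$ in the sphere, and Gr\"otzsch's theorem~\cite{grotzsch1959} would then yield a $3$-coloring, contradicting $4$-criticality. Representativity at least $2$ follows analogously: if a non-contractible simple closed curve meets $H$ only at a vertex $v$, cutting along it splits $v$ into $v', v''$ on the boundary of a cylindrical planar drawing of a triangle-free graph; Gr\"otzsch yields a $3$-coloring, and permuting colors on one side lets us arrange $v' \equiv v''$, lifting to a $3$-coloring of $H$.

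\emph{Census bound.} I would split on whether $H$ contains a non-contractible $4$-cycle. If not, Theorem~\ref{thm-gen} directly gives $\sum \CC(H) \le c_{\text{torus}}$. If $H$ does contain a non-contractible $4$-cycle $C$, cut the torus along $C$ to obtain a cylindrical planar drawing of $H$ with two boundary $4$-cycles (copies of $C$). Since $H$ is $4$-critical, the two copies must receive colorings that do not glue back to a $3$-coloring of $H$; this imposes a precoloring-incompatibility condition on the planar part, and precoloring-extension results for planar triangle-free graphs bound the total length of the non-quadrangular faces in this case too.

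\emph{Enumeration.} Euler's formula and minimum degree $3$ give $\sum_{i\ge 4}(6-i)f_i \ge 0$, so every face of length at least $7$ must be ``paid for'' by $4$- and $5$-faces. This already restricts the census to a finite list of candidates, but pruning the list to the five in the statement is the crux of the argument and the main obstacle. For each non-listed candidate, one must either produce a $3$-coloring of $H$ by extending a precoloring of its non-quadrangular faces (using the near-quadrangulation theory of~\cite{trfree6}), or find a smaller $4$-critical subgraph contradicting criticality of $H$. I expect the most delicate cases to be those differing from a listed census by an extra $6$-face or an extra $\{5,5\}$-pair, such as $\{6,6\}$, $\{5,5,6,6\}$, or $\{5,5,5,5,5,5\}$, where the local structure is nearly quadrangular and a careful discharging argument seems necessary to certify $3$-colorability.
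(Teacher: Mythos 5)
This theorem is cited from your reference~\cite{torirr} (the companion paper ``$4$-critical triangle-free toroidal graphs are irreducible''), and the present paper does not reprove it; the only hint given here about the actual proof is the remark following Lemma~\ref{SubgrCrit}: ``using the census information about critical graphs in disks from Lemma~\ref{lemma-sset}, one can inductively prove Theorem~\ref{thm-centor}.'' That is, the real proof is an induction on $|V(H)|$: the base case is the handful of irreducible graphs (Theorem~\ref{thm-irr}), and the induction step collapses a $4$-face, invokes the inductive hypothesis on the resulting smaller $4$-critical triangle-free graph, and then uses Lemma~\ref{lemma-unredu} together with the sets $\SS_k$ to control how the census can change under the inverse of the collapse. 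Your plan is a genuinely different route: a direct structural argument followed by Euler-formula discharging and precoloring extension, with no induction on the reduction operation at all. That difference is worth flagging, because the inductive route is precisely what makes the enumeration tractable --- it replaces ``analyze all possible nearly-quadrangular configurations'' with ``track census changes under a single local operation.''

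There are two concrete gaps. First, and most importantly, your enumeration stage is not a proof: you yourself identify narrowing to the five listed multisets as ``the crux of the argument and the main obstacle'' and only say what one \emph{would} have to do. Nothing in your sketch rules out, say, $\{6,6\}$ or $\{5,5,5,5,5,5\}$, and there is no reason to expect that a standalone discharging or precoloring-extension argument for these cases is short; the cited proof avoids ever facing them directly by working inductively. Second, the representativity argument is broken as stated. After cutting along a non-contractible curve $\gamma$ that meets $H$ only at $v$, you obtain a single connected planar graph with two copies $v',v''$ of $v$; there is no ``one side'' on which to independently permute colors, so you cannot force $\varphi(v')=\varphi(v'')$ this way. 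One can still conclude (for instance by observing that $H-v$ then fails to be $2$-cell embedded and is therefore planar, and then arguing carefully about re-attaching $v$, or by a more delicate cut-and-identify argument), but the step as written does not go through. Your $2$-connectedness and $2$-cell arguments, and the reduction of the no-non-contractible-$4$-cycle case to Theorem~\ref{thm-gen}, are sound in outline; the rest needs substantially more than is on the page.
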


The main result of this paper is an exact description of $4$-critical triangle-free toroidal graphs,
providing a precise characterization of 3-colorability of triangle-free graphs drawn in the torus.
There are infinitely many such $4$-critical graphs, but they all can be obtained by quadrangulating
faces of one of 186 template graphs.  More precisely, a \emph{simple template} $T$ consists of a graph $G_T$ $2$-cell embedded in the torus
and a subset $Q_T$ of its even-length faces.  A graph drawn in the torus $G$ \emph{is represented} by the simple template $T$ if
a graph homeomorphic to $G$ is obtained from $G_T$ by filling each face in $Q_T$ by an arbitrary quadrangulation.
\begin{theorem}\label{thm-main}
There exists a set $\TT$ of 186 simple templates with the following properties:
\begin{itemize}
\item Every $4$-critical triangle-free graph drawn in the torus is represented by a simple template belonging to $\TT$.
\item No graph represented by a simple template belonging to $\TT$ is $3$-colorable.
\end{itemize}
\end{theorem}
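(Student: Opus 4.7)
My approach begins with Theorem \ref{thm-centor}, which is the crucial reduction: any 4-critical triangle-free toroidal graph $H$ is 2-connected, 2-cell embedded with representativity at least $2$, and has at most four non-quadrangular faces whose lengths come from a fixed tiny set. I would therefore decompose $H$ as a ``skeleton'' $G_T \subseteq H$---consisting of the union of the boundaries of the non-4-faces together with the minimal additional edges needed to pin down the topology of the embedding (e.g.\ so that every essential non-contractible curve meets $G_T$)---with the remaining open disks of $\Sigma \setminus G_T$ filled by quadrangulations. This matches exactly the simple template structure from the statement, with $Q_T$ the set of even-length faces of $G_T$ that actually get filled.

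The plan is then to enumerate all possible skeletons. Since the total length of the non-4-face boundaries is bounded by Theorem \ref{thm-centor}, and since only a bounded amount of additional structure is needed to capture the essential cycles of the embedding (a consequence of representativity at least $2$ combined with the fact that any quadrangulated disk contributes no new topology), the set of candidate $G_T$ up to homeomorphism of the torus is finite and enumerable. I would implement a computer search that generates 2-cell embedded toroidal graphs with the prescribed census, chooses a valid $Q_T$, discards those whose representatives are 3-colorable for some filling, and removes templates properly contained in others, leaving a minimal list---which, by the claim, has exactly $186$ elements. The first bullet (containment) of the theorem then follows directly from Theorem \ref{thm-centor} together with this exhaustive enumeration.

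The main obstacle is the second bullet: for each $T \in \TT$, one must verify that \emph{no} quadrangulation filling yields a 3-colorable graph. The key observation is that a 3-coloring of a quadrangulation of a disk is rigidly controlled by its boundary walk---the sequence of color classes on the boundary must respect a $\mathbb{Z}/2$-type parity condition inherited from the underlying bipartite structure. This lets one project 3-colorability of the filled graph onto a finite list-coloring-type question on $G_T$ alone, which can be tested exhaustively. Making this projection sufficiently uniform to dispatch all templates by a clean combinatorial criterion---rather than descending into $186$ hand case analyses---is the delicate part, and I expect to rely on the ``almost quadrangulation'' machinery of \cite{trfree6}, plus an inductive reduction that replaces each candidate quadrangular filling by a canonical smallest representative preserving 3-colorability obstructions.
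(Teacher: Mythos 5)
Your approach is substantially different from the paper's, and it has a genuine gap at its core: you have not explained why the set of candidate skeletons $G_T$ is finite, let alone enumerable in practice. Theorem~\ref{thm-centor} bounds the census, hence the total length of the non-quadrangular face boundaries, but it does not bound the size of $G_T$. The quadrangulated regions can be topologically nontrivial and arbitrarily large (for example, a long cylinder of non-contractible $4$-cycles wrapping around the torus), so your assertion that ``any quadrangulated disk contributes no new topology'' and that only ``a bounded amount of additional structure is needed to capture the essential cycles'' is precisely the hard content that needs proof. Indeed, every template in $\TT$ other than the one for $I_4$ contains a non-contractible $4$-cycle, and the quadrangulations filling $Q_T$ are essential to the topology. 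Without a bound on $|V(G_T)|$, the proposed exhaustive generation never terminates. The paper circumvents this not by a top-down enumeration but by a bottom-up induction: every $4$-critical triangle-free toroidal graph is either one of the four irreducible graphs of Theorem~\ref{thm-irr} or admits a triangle-free reduction $H$, and Theorem~\ref{thm-aplif} shows that $G$ is then represented by a template ``grown'' from a template representing $H$ via amplification, barrier-adding, and criticalization. The enumeration then starts from the four irreducible templates and closes under growing; the paper itself acknowledges that the termination of this process is not guaranteed a priori and is confirmed only by the computation producing the $186$ templates.

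A secondary gap is in your treatment of the second bullet. The claim that a $3$-coloring of a quadrangulated disk is ``rigidly controlled'' by a $\mathbb{Z}/2$-type parity on the boundary is an oversimplification. The relevant invariant is the winding number $\omega_\varphi$, a $\mathbb{Z}$-valued quantity divisible by $3$ with the same parity as the walk length, and extendability into a disk is governed not just by $\omega_\varphi(f)=0$ but by the full barrier characterization of Lemma~\ref{lemma-disk} (short paths and short separating cycles with incompatible winding). This is exactly what the notions of $f$-barriers, consistent systems $\BB$, and Lemmas~\ref{lemma-supnocol} and~\ref{lemma-barrier} formalize; you would need an equivalent of this machinery, not merely a parity projection, to certify non-$3$-colorability of every realization of a simple template. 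Your appeal to the ``almost quadrangulation'' theory of~\cite{trfree6} points in the right direction, but the projection onto a finite list-coloring problem on $G_T$ alone does not follow from bipartiteness of quadrangulations; it requires the winding-number and barrier analysis.
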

Consequently, a triangle-free graph $G$ drawn in the torus is $3$-colorable if and only if no subgraph of $G$
is represented by a simple template belonging to $\TT$.  The set $\TT$ is presented in the Appendix; more usefully,
it can be found in a computer-readable format (together with the programs we used to generate it) at
\href{https://iuuk.mff.cuni.cz/~rakdver/torus/}{\url{https://iuuk.mff.cuni.cz/~rakdver/torus/}}.

Let $T_0\in \TT$ be the simple template where $G_{T_0}$ is the graph $I_4$ depicted in
Figure~\ref{fig-irreducibles} and $Q_{T_0}=\emptyset$.  Note that $I_4$ is a quadrangulation and
it is the only graph represented by this simple template.  Every other simple template in $\TT$
represents an infinite family of $4$-critical triangle-free toroidal graphs.
Moreover, for every $T\in\TT\setminus\{T_0\}$, the graph $G_T$ contains a non-contractible $4$-cycle,
while $I_4$ has edge-width five (recall the \emph{edge-width} of a graph drawn in a surface
is the length of its shortest non-contractible cycle).
\begin{corollary}\label{cor-ew}
Let $G$ be a triangle-free graph drawn in the torus.
\begin{itemize}
\item If $G$ has edge-width at least six, then $G$ is $3$-colorable.
\item If $G$ has edge-width five, then $G$ is $3$-colorable if and only if it does not contain $I_4$ as a subgraph.
\end{itemize}
\end{corollary}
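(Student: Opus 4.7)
The plan is to reduce everything to Theorem~\ref{thm-main} via the standard fact that a graph is $3$-colorable if and only if it contains no $4$-critical subgraph, together with the two properties of $\TT$ recorded right after that theorem: the template $T_0$ is simply the graph $I_4$ (which has edge-width $5$), while every other $T\in\TT$ satisfies that $G_T$ contains a non-contractible $4$-cycle.

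The argument runs by contraposition. Suppose $G$ is triangle-free, drawn in the torus, and not $3$-colorable. Then $G$ contains a $4$-critical subgraph $H$, which inherits from $G$ a triangle-free drawing in the torus. By Theorem~\ref{thm-main}, $H$ is represented by some simple template $T\in\TT$. I would split on whether $T=T_0$ or not, showing in either case that $G$ has small edge-width.

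In the case $T=T_0$, the only graph represented by $T_0$ is $I_4$, so $H$ is (homeomorphic to) $I_4$. Hence $G\supseteq H$ contains $I_4$, which has edge-width $5$, so $\mathrm{ew}(G)\le 5$. In the case $T\neq T_0$, pick a non-contractible $4$-cycle $C$ in $G_T$; since filling the faces in $Q_T$ by quadrangulations only introduces new vertices and edges in the interiors of those faces, $C$ persists as a non-contractible $4$-cycle in every graph represented by $T$, hence in $H$ and therefore in $G$. Thus $\mathrm{ew}(G)\le 4$.

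The two bullets then fall out. If $\mathrm{ew}(G)\ge 6$, neither case can occur, so $G$ must be $3$-colorable. If $\mathrm{ew}(G)=5$, the second case is excluded and the first forces $G$ to contain $I_4$; conversely, if $G$ contains $I_4$, then by the second clause of Theorem~\ref{thm-main} the subgraph $I_4$ is not $3$-colorable, so $G$ is not $3$-colorable either. The only subtle point — and the only place a careless argument could fail — is the assertion that a non-contractible $4$-cycle in $G_T$ remains non-contractible and of length $4$ in every graph represented by $T$; I would verify this explicitly from the definition of "represented by $T$", since quadrangulating a face neither shortens a cycle outside that face nor changes its homotopy class.
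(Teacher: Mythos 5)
Your argument is correct and matches exactly the reasoning the paper relies on (the paper states this corollary without a formal proof, leaning on the paragraph immediately preceding it: $T_0$ represents only $I_4$ with edge-width five, and every other $G_T$ with $T\in\TT$ contains a non-contractible $4$-cycle). Your explicit verification that a non-contractible $4$-cycle of $G_T$ survives through quadrangulation, the defining homeomorphism, and the subgraph inclusion $H\subseteq G$ is precisely the point worth checking, and you handle it correctly.
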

Let us remark that in~\cite{cylflow}, we proved 3-colorability for graphs of edge-width at least 21,
and used this property to design a simple algorithm to decide 3-colorability of triangle-free
toroidal graphs.  The bound from Corollary~\ref{cor-ew} substantially improves the multiplicative
constants in the time complexity of this algorithm.

\subsection*{Proof outline}

Thomassen~\cite{thom-torus} proved that every graph embedded in the torus without
contractible $(\le\!4)$-cycles (but possibly with non-contractible triangles or
$4$-cycles) is $3$-colorable.  Moreover, every contractible 4-cycle in an embedded
4-critical graph is know to bound a face.  Consequently, every $4$-critical triangle-free
graph $G$ drawn in the torus has a $4$-face. A natural way of dealing with 4-faces
is to identify opposite vertices of one of them, effectively removing it from the
graph. This operation is of particular interest as while reducing size of the
graph it never produces graphs with lower chromatic number, and thus the resulting
graph contains a 4-critical subgraph $G'$.  We say $G'$ is a \emph{reduction} of $G$.  
To inductively prove Theorem~\ref{thm-main}, we need to establish the following claims
for a $4$-critical triangle-free graph $G$ drawn in the torus:
\begin{itemize}
\item If for every 4-face of $G$, the identification of opposite vertices
creates a triangle (we call such graphs \emph{irreducible}), then $G$ is represented
by an element of $\TT$.
\item If $G'$ is a triangle-free reduction of $G$ and $G'$ is represented by an element of $\TT$,
then $G$ is also represented by an element of $\TT$.
\end{itemize}
In~\cite{torirr}, we took care of the former step, proving that there are
exactly four such irreducible graphs up to homeomorphism.
For the latter step, we need to study the inverse to the reduction process.
This inverse is quite well understood---it has been used in a similar way in~\cite{trfree4,torirr},
and inverses to more general reduction operations have been used in a number of other
papers on embedded critical graphs.  Nevertheless, we need to develop the machinery
for mimicking the inverse to reduction on templates (Theorem~\ref{thm-aplif}).  Finally, we finish the proof
via a computer-assisted case analysis, showing that any application of the inverse to reduction
operation on an element of $\TT$ results in another element of $\TT$.  More precisely,
we actually use computer-assisted enumeration, starting from simple templates
corresponding to the four irreducible graphs and adding further templates obtained
by applying the inverse to reduction operation, until we obtain a set closed on this operation.

Let us note that it is not a priori clear that this enumeration process succeeds;
indeed, on more complicated surfaces, a more general kind of templates (with non-2-cell quadrangulated faces)
has to be considered~\cite{cylgen-part3}.  Also, the general bounds from~\cite{trfree4,cylgen-part3}
on the sizes of templates are large, giving no guarantee that the enumeration can be
accomplished in practice.  Fortunately, neither of these concerns materialized.

\subsection*{Algorithmic remarks}

Theorem~\ref{thm-main} gives a way to test whether a triangle-free toroidal graph $G$ is 3-colorable,
by checking whether it contains a subgraph represented by an element $T\in\TT$.  Performing this test
efficiently is not entirely trivial, though.  After eliminating contractible separating cycles,
we could simply try all possible ways $G_T$ appears as a subgraph of $G$ and for each of them verify whether
the faces in $Q_T$ contain a quadrangulation; this leads to a polynomial-time algorithm, but
with a rather high exponent.  However, there is a faster algorithm (described in more detail in~\cite{cylflow}):
First, we preprocess the graph $G$:
\begin{itemize}
\item While $G$ contains a contractible separating $(\le\!5)$-cycle $C$, delete from $G$ all vertices and edges
contained in the open disk $\Lambda$ bounded by $C$.
\item While $G$ contains a contractible separating $6$-cycle $C$ such that the open disk $\Lambda$ bounded by $C$
contains a face of length greater than four, delete from $G$ all vertices and edges contained in $\Lambda$.
\item While $G$ contains a contractible separating $7$-cycle $C$ such that the open disk $\Lambda$ bounded by $C$
contains either a face of length greater than five or at least two faces of length five, delete from $G$ all vertices
and edges contained in $\Lambda$.
\end{itemize}
These deletions do not change 3-colorability of $G$, by standard precoloring extension arguments~\cite{aksenov,bor7c}.
If the census of $G$ after performing these reductions is not $\emptyset$, $\{5,5\}$, $\{5,5,5,5\}$, $\{5,5,6\}$, or $\{5,7\}$,
then $G$ does not contain a subgraph with such a census (e.g., if $H\subseteq G$ had census $\{5,7\}$, then the $4$-faces
and the $5$-face of $H$ would also be faces in $G$ due to the first reduction rule, and the $7$-face of $H$ would either
be a face in $G$ or would be filled by $4$-faces and one $5$-face of $G$ as a consequence of the third reduction rule,
implying the census of $G$ is $\{5,7\}$ or $\{5,5\}$).  Consequently, by Theorem~\ref{thm-centor}, $G$ would not have a $4$-critical
subgraph, and thus it would be $3$-colorable.  Furthermore, by Corollary~\ref{cor-ew} and the first reduction rule, if $G$ has edge-width
at least five, it is either isomorphic to $I_4$ or $3$-colorable.

Therefore, the only remaining case is that the census of $G$ is $\emptyset$, $\{5,5\}$, $\{5,5,5,5\}$, $\{5,5,6\}$, or $\{5,7\}$
and the edge-width of $G$ is four.  In this case, we can apply an algorithm of~\cite{cylflow}, which decides whether $G$ is $3$-colorable
by at most 18 runs of a maximum flow algorithm (followed by easy postprocessing) in linear time.

Can we also find a 3-coloring when one exists?  The proof of Thomassen~\cite{thom-torus} that triangle-free toroidal
graphs without contractible 4-cycles are 3-colorable is quite involved, and by necessity has to be considered in this context.
But, what if we allow ourselves to consider it as a blackbox providing a 3-coloring of such a graph?

There is a simple reduction in the case $G$ contains a 4-face $f$ such that neither of
the two possible identifications creates a triangle: Note that in any 3-coloring of $G$, there are two opposite vertices on $f$
that receive the same color, and thus the graph obtained by identifying them is also $3$-colorable.  Hence, we can consider
both possible identifications, find one resulting in a $3$-colorable graph $G'$ using the $3$-colorability testing algorithm,
recursively find a $3$-coloring of $G'$, then transform it into a $3$-coloring of $G$ by giving both identified vertices the
color of the vertex created by their identification.

This reduction clearly applies when $G$ has edge-width at least six.  Moreover, in the case $G$ has edge-width at most five
and bounded census, we can obtain a $3$-coloring by the algorithm of~\cite{cylflow}.  However, the case that $G$ has
edge-width at most five (or, more precisely, no 4-face can be collapsed in both ways without creating a triangle)
and large census is problematic.  It can be dealt with, see~\cite{trfree7} for details, but the
resulting algorithm is not simple.

\section{Preliminaries}

For us, a \emph{multiset} is a set of distinct elements, each of which is
assigned a value (the values do not need to be necessarily distinct).  So, for
example, $\{5,5,6\}$ denotes a set of three elements, two of value $5$ and one
of value $6$. A union of multisets is always a disjoint union. So, for example,
$\{5,6\} \cup \{5\}$ is always $\{5,5,6\}$. 

All graphs we consider come with a fixed drawing in a surface. For any graph
$G$ we denote as $F(G)$ the sets of its faces. Suppose $X$ is a set of faces of
a graph drawn in a surface.  The \emph{census} of $X$ is the multiset of
lengths of the faces of $X$ whose length is not four, and the census $\CC(G)$ of a graph $G$
is the census of $F(G)$.

Consider a graph $H$ and a subgraph $C$ of $H$.  We say $H$ is \emph{$C$-critical}
if $C\neq H$ and for every proper subgraph $H'$ of $H$ containing $C$,
there exists a $3$-coloring of $C$ that extends to a $3$-coloring of $H'$, but
not to a $3$-coloring of $H$.  In other words, deleting any edge of
$E(H) \setminus E(C)$ from $H$ enlarges the set of precolorings of $C$ that extend
to a $3$-coloring of the whole graph.

Let $\mathcal{G}_k$ denote the set of all triangle-free graphs $H$ drawn
in the disk with the boundary traced by a cycle $C\subset H$ of length $k$
such that $H$ is $C$-critical.  Let $\SS_k$ denote set of all censuses of graphs in $\mathcal{G}_k$. 

The results of~\cite{thom-torus} and~\cite{trfree4} imply the following lemma, see~\cite{torirr} for more details.
\begin{lemma}\label{lemma-sset}
For every integer $k\ge 4$, the set $\SS_k$ is finite, and if $I\in \SS_k$, then $I=\emptyset$ or $\max I\le k-2$.  Furthermore,
\begin{itemize}
\item $\SS_4 = \SS_5 = \emptyset$,
\item $\SS_6 = \{\emptyset\}$,
\item $\SS_7 = \{\{5\}\}$,
\item $\SS_8 = \{\emptyset, \{5,5\}, \{6\}\}$, and
\item $\SS_9 = \{\{5\}, \{5,5,5\}, \{5,6\}, \{7\}\}$.
\end{itemize}
\end{lemma}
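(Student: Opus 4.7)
The plan is to reduce the lemma to three ingredients from the cited literature plus a finite case check. The first ingredient is Thomassen's theorem from \cite{thom-torus}, which asserts that every precoloring of a cycle of length at most five in a triangle-free plane graph extends to the interior. Applied directly, this means $\mathcal{G}_4=\mathcal{G}_5=\emptyset$, hence $\SS_4=\SS_5=\emptyset$. The second ingredient is the census bound established in \cite{trfree4} for $C$-critical triangle-free disk graphs: there is a constant $c_k$ such that $\sum \CC(H)\le c_k$ for every $H\in\mathcal{G}_k$. This immediately yields finiteness of $\SS_k$. The third ingredient is the observation (essentially due to a coloring-extension/unrolling argument on the interior face, also present in \cite{trfree4}) that no internal face of a $C$-critical disk graph with $|C|=k$ can have length exceeding $k-2$; this gives the uniform upper bound $\max I\le k-2$.

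Given these ingredients, for each of $k\in\{6,7,8,9\}$ I would enumerate the finitely many candidate multisets with $\max I\le k-2$ and entries in $\{5,6,\ldots,k-2\}$, then filter them with the parity constraint coming from the handshaking lemma on faces: since $\sum_{f\in F(H)}|f|=2|E(H)|$ and the contribution of 4-faces is even, we get $k+\sum I\equiv 0\pmod 2$. For $k=6$ the only candidate is $\emptyset$; for $k=7$ the candidates are $\{5\}$; for $k=8$ they are $\emptyset,\{5,5\},\{6\}$; and for $k=9$ they are $\{5\},\{7\},\{5,5,5\},\{5,6\}$, matching exactly the lists claimed.

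To finish I would verify that every listed census is realized by some $H\in\mathcal{G}_k$ (otherwise $\SS_k$ would be a strict subset of the candidate list). This is the constructive half of the argument: for each candidate, one exhibits a concrete $C$-critical triangle-free plane graph attaining it—these are the standard small critical configurations catalogued in \cite{trfree4, torirr}, so rather than redrawing them I would simply point to the explicit constructions there. The membership check for the census $\emptyset$ at $k=6$ uses a bipartite quadrangulation of the disk together with a non-extending 2-chromatic precoloring, and the remaining cases follow analogously.

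The main obstacle is the face-length bound $\max I\le k-2$: the finiteness of $\SS_k$ and the enumeration of parity-compatible multisets are essentially mechanical, and the $k\in\{4,5\}$ cases are immediate from \cite{thom-torus}, but the $k-2$ bound is sharp for $k=7,8,9$ and requires the careful precoloring-extension argument of \cite{trfree4} (treating an oversized interior face as a potential outer face of a smaller critical instance and deriving a contradiction with criticality). Once this bound is in hand, the rest of the proof is a bookkeeping exercise; without it, the finite enumeration step cannot even begin.
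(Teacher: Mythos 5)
The paper does not prove this lemma in-line; it is stated with a pointer to \cite{torirr} (and the underlying facts are attributed to \cite{thom-torus} and \cite{trfree4}), so there is no in-paper argument to compare against directly. Your overall plan---Thomassen for $k\le 5$, a census bound from \cite{trfree4} for finiteness, a parity/max-length filter for $k\in\{6,\dots,9\}$, and a realizability check---is structurally the natural one, and the parity constraint $k+\sum I\equiv 0\pmod 2$ is correctly derived (each edge of $C$ lies on only one internal face, so $\sum_{f}|f|=2|E|-k$).

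However, there is a genuine gap in the exclusion direction. The enumeration you describe is not actually finite: multisets with entries in $\{5,\ldots,k-2\}$ and $\sum I$ of a fixed parity form an infinite family. For instance, at $k=8$ the multisets $\{6,6\}$, $\{5,5,6\}$, $\{5,5,5,5\}$, $\{5,5,6,6\}$, \ldots\ all have $\max\le 6$ and even sum, so your stated constraints do not rule them out. Invoking ``there is a constant $c_k$ with $\sum\CC(H)\le c_k$'' gives finiteness in principle but no usable enumeration unless you name the bound; what is actually needed here is the sharp quantitative inequality from \cite{trfree4} (of the form $\sum_{i\in I}(i-4)\le k-5$, or an equivalent), which combined with parity and $\max I\le k-2$ collapses the candidate set to exactly the lists in the lemma. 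As written, your argument jumps from the unbounded candidate pool to the claimed answer without the constraint that makes the step finite, so it effectively reverse-engineers the lists rather than deriving them. A secondary but real omission, given that the lemma asserts equality of sets, is that the realizability half (each listed census is actually attained by some $H\in\mathcal{G}_k$) is deferred wholesale to constructions ``catalogued in \cite{trfree4,torirr}'' without pinpointing them.
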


Consider a graph $G$ drawn in the torus and a $4$-face $f$ of $G$ bounded by the cycle $v_1v_2v_3v_4$.
Let $G'$ be the graph drawn in the torus obtained from $G$ by identifying $v_2$ with $v_4$
and suppressing the parallel edges from the resulting vertex $v$ to $v_1$ and $v_3$.
Let $P$ be the path $v_1vv_3$ in $G'$.  We say that $G'$ is obtained from $G$ by \emph{collapsing $f$ to $P$}.
If $G'$ is $3$-colorable then $G$ is $3$-colorable as well; hence, if $G$ is $4$-critical,
then $G'$ is not $3$-colorable, and thus it contains a $4$-critical subgraph $H$.
If $H$ is triangle-free, we say that $H$ is a \emph{reduction} of $G$.  In~\cite{torirr}, we proved the following technical lemma
relating $H$ to a subgraph of $G$ (there are several cases depending on whether the edges $vv_1$ and $vv_3$ and the vertices
$v_1$ and $v_3$ belong to $H$ or not), see Figure~\ref{fig-unredu} for an illustration.

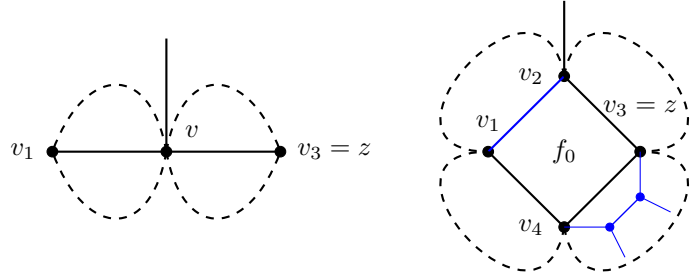
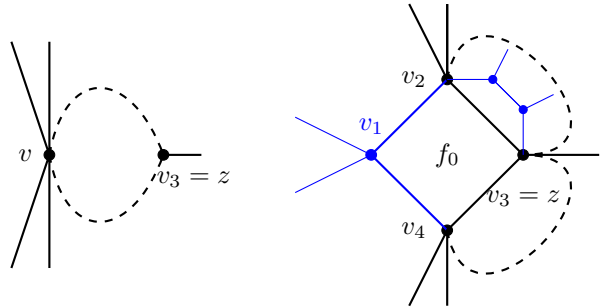
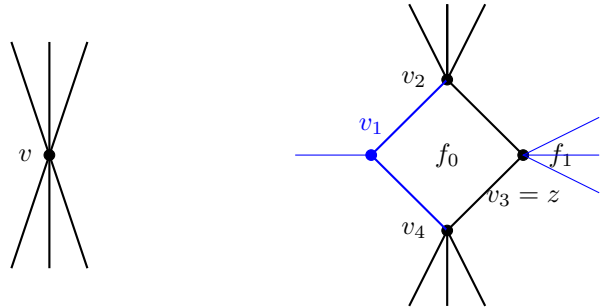
\begin{figure}
\renewcommand{\thesubfigure}{\roman{subfigure}}
\begin{subfigure}{0.9\textwidth}
\centering
\begin{tikzpicture}
\useasboundingbox (-2,-2) rectangle (2,2);
\coordinate (V1) at (-1.5,0);
\coordinate (V) at (0,0);
\coordinate (V3) at (1.5,0);
\coordinate (T) at (0,1.5);
\filldraw (V1) circle (2pt) node[left=1mm] {$v_1$};
\filldraw (V) circle (2pt) node[label=30:$v$] {}; 
\filldraw (V3) circle (2pt) node[right=1mm] {$v_3=z$};
\draw [thick] (V1) -- (V) -- (V3);
\draw [thick] (V) -- (T);
\draw [thick, dashed] (V) to[out=100,in=70, distance=35pt] (V1);
\draw [thick, dashed] (V) to[out=-100,in=-70, distance=35pt] (V1);
\draw [thick, dashed] (V) to[out=80,in=110, distance=35pt] (V3);
\draw [thick, dashed] (V) to[out=-80,in=-110, distance=35pt] (V3);
\end{tikzpicture}
\hspace{1cm}
\begin{tikzpicture}
\useasboundingbox (-2,-2) rectangle (2,2);
\coordinate (V1) at (-1,0);
\coordinate (V2) at (0,1);
\coordinate (V3) at (1,0);
\coordinate (V4) at (0,-1);
\coordinate (T) at (0,2);
\coordinate (F1) at (1,-0.6);
\coordinate (F2) at (0.6,-1);
\filldraw (V1) circle (2pt) node[above=1.5mm] {$v_1$};
\filldraw (V2) circle (2pt) node[left=1.5mm] {$v_2$};
\filldraw (V3) circle (2pt) node[above=3.5mm] {$v_3=z$};
\filldraw (V4) circle (2pt) node[left=1.5mm] {$v_4$};
\node at (0,0) {$f_0$};
\filldraw [color=blue] (F1) circle (1.5pt);
\filldraw [color=blue] (F2) circle (1.5pt);
\draw [thick, color=blue] (V1) -- (V2); 
\draw [thick] (V2) -- (V3) -- (V4) -- (V1);
\draw [thick] (V2) -- (T);
\draw [thick, dashed] (V2) to[out=100,in=180, distance=50pt] (V1);
\draw [thick, dashed] (V2) to[out=80,in=0, distance=50pt] (V3);
\draw [thick, dashed] (V4) to[out=-100,in=180, distance=50pt] (V1);
\draw [thick, dashed] (V4) to[out=-80,in=-0, distance=50pt] (V3);
\draw [color=blue] (V3) -- (F1) -- (F2) -- (V4);
\draw [color=blue] (1.4,-0.8) -- (F1) -- (F2) -- (0.8,-1.4); 
\end{tikzpicture}
\caption{$vz \in E(H)$ for some $z \in \{v_1,v_3\}$}
\end{subfigure}

\bigskip

\begin{subfigure}{0.9\textwidth}
\centering
\begin{tikzpicture}
\useasboundingbox (-2,-2) rectangle (2,2);
\coordinate (V) at (0,0);
\coordinate (V3) at (1.5,0);
\coordinate (TL) at (-0.5,1.5);
\coordinate (TC) at (0,1.5);
\coordinate (BL) at (-0.5,-1.5);
\coordinate (BC) at (0,-1.5);
\coordinate (RC) at (2,0);
\filldraw (V) circle (2pt) node[left=1mm] {$v$}; 
\filldraw (V3) circle (2pt) node[right=4mm,below=1mm] {$v_3=z$};
\draw [thick, dashed] (V) to[out=80,in=110, distance=35pt] (V3);
\draw [thick, dashed] (V) to[out=-80,in=-110, distance=35pt] (V3);
\draw [thick] (TL) -- (V) -- (TC); 
\draw [thick] (BL) -- (V) -- (BC); 
\draw [thick] (V3) -- (RC); 
\end{tikzpicture}
\hspace{1cm}
\begin{tikzpicture}
\useasboundingbox (-2,-2) rectangle (2,2);
\coordinate (V1) at (-1,0);
\coordinate (V2) at (0,1);
\coordinate (V3) at (1,0);
\coordinate (V4) at (0,-1);
\coordinate (F1) at (1,0.6);
\coordinate (F2) at (0.6,1);
\coordinate (TL) at (-0.5,2);
\coordinate (TC) at (0,2);
\coordinate (BL) at (-0.5,-2);
\coordinate (BC) at (0,-2);
\coordinate (RC) at (2,0);
\coordinate (LT) at (-2,0.5);
\coordinate (LB) at (-2,-0.5);
\filldraw (V1) [color=blue] circle (2pt) node[above=1.5mm] {$v_1$};
\filldraw (V2) circle (2pt) node[left=1.5mm] {$v_2$};
\filldraw (V3) circle (2pt) node[below=3.5mm] {$v_3=z$};
\filldraw (V4) circle (2pt) node[left=1.5mm] {$v_4$};
\node at (0,0) {$f_0$};
\filldraw [color=blue] (F1) circle (1.5pt);
\filldraw [color=blue] (F2) circle (1.5pt);
\draw [thick, color=blue] (V4) -- (V1) -- (V2); 
\draw [thick] (V2) -- (V3) -- (V4);
\draw [thick] (V2) -- (T);
\draw [thick, dashed] (V2) to[out=80,in=0, distance=50pt] (V3);
\draw [thick, dashed] (V4) to[out=-80,in=-0, distance=50pt] (V3);
\draw [thick] (TL) -- (V2) -- (TC);
\draw [thick] (BL) -- (V4) -- (BC);
\draw [thick] (V3) -- (RC);
\draw [color=blue] (LT) -- (V1) -- (LB);
\draw [color=blue] (V3) -- (F1) -- (F2) -- (V2);
\draw [color=blue] (1.4,0.8) -- (F1) -- (F2) -- (0.8,1.4); 
\end{tikzpicture}
\caption{$vv_1,vv_3 \notin E(H)$, $z \in V(H)$ for some $z \in \{v_1,v_3\}$}
\end{subfigure}

\bigskip

\begin{subfigure}{0.9\textwidth}
\centering
\begin{tikzpicture}
\useasboundingbox (-2,-2) rectangle (2,2);
\coordinate (V) at (0,0);
\coordinate (V3) at (1.5,0);
\coordinate (TL) at (-0.5,1.5);
\coordinate (TC) at (0,1.5);
\coordinate (TR) at (0.5,1.5);
\coordinate (BL) at (-0.5,-1.5);
\coordinate (BC) at (0,-1.5);
\coordinate (BR) at (0.5,-1.5);
\filldraw (V) circle (2pt) node[left=1mm] {$v$}; 
\draw [thick] (TL) -- (V) -- (TR); 
\draw [thick] (BL) -- (V) -- (BR); 
\draw [thick] (TC) -- (V) -- (BC); 
\end{tikzpicture}
\hspace{1cm}
\begin{tikzpicture}
\useasboundingbox (-2,-2) rectangle (2,2);
\coordinate (V1) at (-1,0);
\coordinate (V2) at (0,1);
\coordinate (V3) at (1,0);
\coordinate (V4) at (0,-1);
\coordinate (F1) at (1,0.6);
\coordinate (F2) at (0.6,1);
\coordinate (TL) at (-0.5,2);
\coordinate (TR) at (0.5,2);
\coordinate (TC) at (0,2);
\coordinate (BL) at (-0.5,-2);
\coordinate (BR) at (0.5,-2);
\coordinate (BC) at (0,-2);
\coordinate (RT) at (2,0.5);
\coordinate (RB) at (2,-0.5);
\coordinate (RC) at (2,0);
\coordinate (LC) at (-2,0);
\filldraw (V1) [color=blue] circle (2pt) node[above=1.5mm] {$v_1$};
\filldraw (V2) circle (2pt) node[left=1.5mm] {$v_2$};
\filldraw (V3) circle (2pt) node[below=3.5mm] {$v_3=z$};
\filldraw (V4) circle (2pt) node[left=1.5mm] {$v_4$};
\node at (0,0) {$f_0$};
\node at (1.5,0) {$f_1$};
\draw [thick, color=blue] (V4) -- (V1) -- (V2); 
\draw [thick] (TC) -- (V2) -- (V3) -- (V4) -- (BC);
\draw [thick] (V2) -- (T);
\draw [thick] (TL) -- (V2) -- (TR);
\draw [thick] (BL) -- (V4) -- (BR);
\draw [color=blue] (V3) -- (RC);
\draw [color=blue] (LC) -- (V1);
\draw [color=blue] (RT) -- (V3) -- (RB);
\end{tikzpicture}
\caption{$v_1,v_3 \notin V(H)$}
\end{subfigure}

\bigskip

\caption{A subgraph $G_1$ of $G$ (on the right side) corresponding to a reduction $H$ of $G$ (on the left side).  Blue edges and vertices
indicate parts of $G$ not belonging to $G_1$.}\label{fig-unredu}
\end{figure}

\begin{lemma}\label{lemma-unredu}
Let $G$ be a $4$-critical triangle-free graph drawn in the torus and let $H$ be a $4$-critical subgraph of a graph
obtained from $G$ by collapsing a $4$-face.   If $H$ is triangle-free, then there exists
\begin{itemize}
\item a subgraph $G_1$ of $G$ whose drawing in the torus is $2$-cell, and
\item a path $v_2zv_4$ contained in the boundary of a face $f_0$ of $G_1$, such that $f_0$ is not a face of $G$,
\end{itemize}
such that one of the following claims holds.
\begin{itemize}
\item[\textup{(i)}] $H$ is obtained from $G_1$ by identifying $v_2$ with $v_4$ to a new vertex $v$ within $f_0$
and suppresing the resulting $2$-face $vz$, or
\item[\textup{(ii)}] $H$ is obtained from $G_1$ by identifying $v_2$ with $v_4$ to a new vertex $v$ within $f_0$
and deleting both resulting edges between $v$ and $z$, or
\item[\textup{(iii)}] $z$ has degree two and it is incident with two distinct faces $f_0$ and $f_1$ in $G_1$,
$f_1$ is not a face of $G$, and $H$ is obtained from $G_1$ by contracting both edges incident with $z$.
\end{itemize}
\end{lemma}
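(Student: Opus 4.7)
The plan is to invert the collapse operation: we lift $H \subseteq G'$ to a subgraph $G_1 \subseteq G$, case-analysing on how $H$ sees the identified vertex $v$ and the ``interface'' $\{v_1,v_3\}$. First we observe that $v \in V(H)$: otherwise $H \subseteq G' \setminus \{v\} = G \setminus \{v_2,v_4\} \subseteq G$ would be a proper non-$3$-colorable subgraph of the $4$-critical $G$. The rotation around $v$ in $G'$ is obtained by splicing the rotations around $v_2$ and $v_4$ in $G$ along the two ``seams'' at $v_1$ and $v_3$, since the identification is performed across $f$. Hence each neighbor $u \in N_H(v) \setminus \{v_1,v_3\}$ belongs to exactly one of the two arcs inherited from $v_2$ or from $v_4$, and the edge $vu$ of $H$ lifts canonically to an edge of $G$ at $v_2$ or $v_4$ accordingly; this produces a skeleton lift $\widetilde H \subseteq G$.

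We then obtain $G_1$ from $\widetilde H$ by choosing what to retain on or near $\partial f$, branching on the following trichotomy:
\begin{itemize}
\item[(i)] If $vz \in E(H)$ for some $z \in \{v_1,v_3\}$, place both edges $v_2z$ and $v_4z$ of $G$ into $G_1$. Identifying $v_2$ with $v_4$ across the face $f_0$ on the $f$-side of the path $v_2zv_4$ creates a bigon $vz$, and suppressing it returns the single edge $vz \in E(H)$.
\item[(ii)] If $vv_1, vv_3 \notin E(H)$ but some $z \in \{v_1,v_3\}$ lies in $V(H)$, again retain both $v_2z, v_4z$ in $G_1$; the identification creates parallel edges between $v$ and $z$, and deleting both matches the absence of $vz$ from $H$.
\item[(iii)] If $v_1, v_3 \notin V(H)$, drop every edge of $G$ incident with $v_3$ except $v_2v_3$ and $v_4v_3$, and set $z = v_3$. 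Then $z$ has degree two in $G_1$ and lies on two distinct faces $f_0, f_1$; contracting both edges at $z$ identifies $v_2$ with $v_4$ and removes $z$, reproducing the neighbourhood of $v$ in $H$.
\end{itemize}
In each case $f_0$ is the face of $G_1$ on the $f$-side of the path $v_2zv_4$; because $G_1$ omits edges of $G$ incident with $\partial f$ or with $z$ (as indicated by the blue portions of Figure~\ref{fig-unredu}), $f_0$ strictly contains the closed disc of $f$ and is therefore not a face of $G$.

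The principal technical obstacle is verifying that the embedding of $G_1$ inherited from $G$ is $2$-cell. By Theorem~\ref{thm-centor}, $H$ is $2$-connected and embedded in the torus in a $2$-cell manner, so the collapse map sends faces of $G_1$ bijectively onto faces of $H$ save for the distinguished face $f_0$ (which collapses to the degenerate region from $f$ and is absorbed into its neighbours in $H$); any non-disc face of $G_1$ would therefore project to a non-disc face of $H$, contradicting $2$-cellness of $H$. Once $G_1$ is known to be $2$-cell embedded, a routine verification confirms that the specified post-processing operation (suppress bigon, delete parallel edges, or contract degree-$2$ vertex) on the collapse of $f_0$ reproduces exactly $H$, yielding one of the three cases.
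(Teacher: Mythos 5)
The paper does not actually prove Lemma~\ref{lemma-unredu} here; it is cited from the authors' earlier paper \cite{torirr}. So there is no in-text proof to compare against. Evaluating your proposal on its own merits, the high-level strategy is the natural one and is almost certainly what \cite{torirr} does: observe $v\in V(H)$, lift $H$ along the rotation at $v$ back to a subgraph of $G$, and branch on whether $v_1,v_3$ and the edges $vv_1,vv_3$ survive in $H$. Your opening observation that $v\in V(H)$, via $G'-v=G-v_2-v_4\subsetneq G$ and $4$-criticality of $G$, is correct and is the right first step, and your trichotomy matches the three cases of the lemma and of Figure~\ref{fig-unredu}.

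However, several of the steps you declare ``routine'' are precisely where the work of the lemma lies, and as written they contain gaps.

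\emph{The skeleton lift is not canonical.} You claim each $u\in N_H(v)\setminus\{v_1,v_3\}$ lifts to exactly one of $v_2u,v_4u$. But the collapse operation only suppresses parallel edges to $v_1$ and $v_3$; if $v_2$ and $v_4$ share a common neighbour $w\notin\{v_1,v_3\}$, then $G'$ has two parallel edges $vw$ and $H$ contains at most one of them, so the lift at $w$ involves a choice that you do not address, and different choices give different $G_1$ and different face structures. (One can rule this out or handle it using the fact that contractible $4$-cycles in a $4$-critical embedded graph bound faces, but you do not invoke this.) Similarly, when both $vv_1,vv_3\in E(H)$, your case (i) only controls one of the two seams; the figure shows that exactly one of $v_1v_2,v_1v_4$ is retained, and this asymmetric choice needs to be stated and justified, as it is what prevents an unwanted parallel pair at $v_1$ after identification.

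\emph{The $2$-cellness argument is too loose.} The claim that ``the collapse map sends faces of $G_1$ bijectively onto faces of $H$ save for $f_0$, which is absorbed into its neighbours'' is not accurate: after pinching $v_2$ to $v_4$ inside $f_0$ one gets a bigon plus a residual face $f_0''$, and suppressing the bigon merges it with the face across the deleted parallel edge, not with $f_0''$; meanwhile $f_0''$ survives as an honest face of $H$. More importantly, topology of a face can change under pinching/contraction, so ``non-disc face of $G_1$ projects to non-disc face of $H$'' is not automatic for the faces touching the identification. A cleaner route is in the other direction: $G_1$ is obtained from $H$ by un-pinching inside the disc face $f_0^H$ (or, in case~(iii), un-contracting inside a disc), which manifestly preserves $2$-cellness; you should argue that way.

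\emph{``$f_0$ is not a face of $G$'' is asserted, not derived.} You say $G_1$ omits edges near $\partial f$, but for the pure skeleton lift this is not obvious: in case~(i) with both $vv_1,vv_3\in E(H)$ nothing in your construction forces an omitted edge. The actual reason (recorded in the paper's remark after the lemma) is that at most one of $v_1v_2,v_1v_4$ can lie in $G_1$, because otherwise the identification would create an unsuppressed parallel pair at $v_1$; this is exactly the case-specific argument missing from your write-up. In case~(iii) you also need (and omit) the observation that $f_1\ne f_0$ and $f_1$ is not a face of $G$, which follows from $\delta(G)\ge 3$ (so $v_3$ has dropped incident edges) and from $2$-cellness/$2$-connectivity of $H$.

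In short: right framework and right case split, but the three places you label as routine --- canonicity of the lift, $2$-cellness of $G_1$, and the non-faceness of $f_0$ (and $f_1$) --- each require a genuine argument that is currently missing or incorrect.
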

Note that no parallel edges except for those explicitly mentioned in the statement of Lemma~\ref{lemma-unredu}
are created by the identification of $v_2$ with $v_4$ (in particular, when the collapsed $4$-face of $G$ is
bounded by a cycle $v_1v_2v_3v_4$ and $z=v_3$, at most one of the edges $v_1v_2$ and $v_1v_4$ belongs to $G_1$;
this is also the reason why we can assume $f_0$ is not a face of $G$).
Lemma~\ref{lemma-unredu} is useful in conjunction with the following basic property of critical graphs,
whose proof can be found e.g. in~\cite{trfree2}.

\begin{lemma}\label{SubgrCrit}
Let $G$ be a graph drawn in surface $\Sigma$, let $\Lambda\subseteq\Sigma$ be an open
disk whose boundary traces a closed walk in $G$, let $C$ be the subgraph of $G$ formed by
vertices and edges contained in the boundary of $\Lambda$, and let $Q$ be the subgraph of $G$ drawn in the closure of
$\Lambda$. If $G$ is 4-critical and $\Lambda$ is not its face, then $Q$ is $C$-critical.
\end{lemma}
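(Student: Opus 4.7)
The plan is to verify the two conditions defining $C$-criticality for $Q$: that $C\neq Q$, and that every proper subgraph $Q'\subsetneq Q$ containing $C$ admits a $3$-coloring of $C$ which extends to $Q'$ but not to $Q$. The first is a topological observation: if $C=Q$ then the open disk $\Lambda$ contains no vertices or edges of $G$ in its interior, so $\Lambda$ is a connected subset of $\Sigma\setminus G$ whose boundary is exactly the prescribed closed walk, forcing $\Lambda$ itself to be a face of $G$ and contradicting the hypothesis.

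For the main condition, fix any such $Q'$ and pick some $e\in E(Q)\setminus E(Q')$ (or, in the degenerate case that $Q$ and $Q'$ share all edges, pick an isolated vertex $v\in V(Q)\setminus V(Q')$ and run the argument with $G-v$ in place of $G-e$; it is identical). Note $e\notin E(C)$ since $C\subseteq Q'$. Because $G$ is $4$-critical, $G-e$ admits a $3$-coloring $\varphi$, and its restriction $\varphi|_C$ extends to $Q'$ trivially via $\varphi|_{Q'}$, since $Q'\subseteq Q-e\subseteq G-e$. It remains to show $\varphi|_C$ does not extend to $Q$. Supposing for contradiction that some $3$-coloring $\psi$ of $Q$ did extend $\varphi|_C$, I would write $G^{o}$ for the subgraph of $G$ formed by the vertices and edges drawn in the complement of the open disk $\Lambda$. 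The fact that $\partial\Lambda$ is traced by a closed walk in $G$ implies every edge of $G$ lies either entirely in $\bar\Lambda$ or entirely outside $\Lambda$, so $V(G)=V(Q)\cup V(G^{o})$ and $E(G)=E(Q)\cup E(G^{o})$ with $Q\cap G^{o}=C$; in particular $e\in E(Q)\setminus E(G^{o})$. The colorings $\psi$ on $Q$ and $\varphi|_{G^{o}}$ on $G^{o}$ agree on $C$, so they glue to a proper $3$-coloring of all of $G$ which also properly colors the edge $e$, contradicting the fact that $G$ has chromatic number $4$.

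The main obstacle is bookkeeping rather than ideas: one has to set up the decomposition $G=Q\cup G^{o}$ along $\bar\Lambda$ cleanly enough that two colorings agreeing on $C$ are guaranteed to combine to a proper coloring of every edge of $G$. Once this is in place, $C$-criticality follows in one line from the hypothesis that $G$ itself is not $3$-colorable.
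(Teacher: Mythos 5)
The paper does not include its own proof of this lemma (it cites \cite{trfree2}), but your argument is the standard one and is correct: deleting an edge $e$ of $E(Q)\setminus E(C)$ from the $4$-critical graph $G$ yields a $3$-coloring $\varphi$ of $G-e$, whose restriction to $C$ extends to $Q'$, and any extension $\psi$ of $\varphi|_C$ to $Q$ would glue with $\varphi$ on the exterior subgraph $G^{o}$ along $C$ to give a $3$-coloring of $G$, a contradiction. The topological decomposition $E(G)=E(Q)\cup E(G^{o})$ with $Q\cap G^{o}=C$ and $e\notin E(G^{o})$ is exactly what makes the gluing work, and you handle the degenerate isolated-vertex case (which in fact cannot arise for $4$-critical $G$) correctly as well.
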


Hence, $G$ can be obtained from its reduction $H$ by first finding its subgraph $G_1$ with properties
described in Lemma~\ref{lemma-unredu}, then filling some of the faces of $G_1$ by graphs
drawn in a disk and critical with respect to the boundary cycle.
As an example why this is useful, using the census information about critical graphs in disks
from Lemma~\ref{lemma-sset}, one can inductively prove Theorem~\ref{thm-centor}, see~\cite{torirr} for details.
For the purposes of this paper, we need to describe this inverse operation in more
detail in the setting of templates.

\section{Templates}

In the introduction, we defined simple templates as means of describing infinite families
of graphs drawn in the torus.  For the proof purposes, we need a more general notion.

\begin{definition}
A \emph{template} $T$ consists of a graph $G_T$ $2$-cell embedded in the torus
and a function $\theta_T$ assigning to each face of $G_T$ a multiset of integers greater or equal to five
such that $\sum \theta_T(f)\equiv |f|\pmod 2$ for every face $f\in F(G_T)$.

We say a graph $H$ $2$-cell embedded in the torus \emph{is represented} by the template $T$
if there exists a homeomorphism $\kappa$ of the torus mapping $G_T$ to a subgraph of $H$,
such that for each face $f$ of $G_T$,
$\theta_T(f)$ is equal to the census of the set of faces of $H$ drawn in $\kappa(f)$.
\end{definition}

We say a face $f\in F(G_T)$ is \emph{proper} if either $|f|=4$ and $\theta_T(f)=\emptyset$, or $\theta_T(f)=\{|f|\}$.
The motivation behind this definition is the following observation.
\begin{observation}
Let $G$ be a triangle-free $4$-critical graph represented by template $T$ via a homeomorphism $\kappa$.
If a face $f$ of $G_T$ is proper, then $\kappa(f)$ is a face of $G$.
\end{observation}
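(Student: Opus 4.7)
I would argue by contradiction: assume $\Lambda:=\kappa(f)$ is not a face of $G$, and derive a census that Lemma~\ref{lemma-sset} forbids.

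Since $G_T$ is $2$-cell embedded, $\Lambda$ is an open disk in the torus, and its boundary is traced by a closed walk $C:=\kappa(\partial f)$ of length $|f|$ in the subgraph $\kappa(G_T)\subseteq G$. Because $G$ is $4$-critical and $\Lambda$ is, by assumption, not a face of $G$, Lemma~\ref{SubgrCrit} gives that the subgraph $Q$ of $G$ drawn in $\overline{\Lambda}$ is $C$-critical. Since $G$ is triangle-free, so is $Q$, and by the defining property of representation the census of the faces of $G$ drawn inside $\Lambda$ equals $\theta_T(f)$. Hence, once $C$ is known to be a simple cycle, $Q\in\mathcal{G}_{|f|}$ and $\theta_T(f)\in\SS_{|f|}$.

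Now invoke properness. If $|f|=4$ and $\theta_T(f)=\emptyset$, then $\emptyset\in\SS_4$, contradicting $\SS_4=\emptyset$ from Lemma~\ref{lemma-sset}. Otherwise $\theta_T(f)=\{|f|\}$ with $|f|\ge 5$ (since the values taken by $\theta_T$ are at least $5$), and the same lemma forces $|f|=\max\theta_T(f)\le|f|-2$, which is absurd. In either case we contradict the supposition that $\kappa(f)$ is not a face, so $\kappa(f)$ is a face of $G$, as claimed.

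The only delicate point I anticipate is verifying that $C$ really is a simple cycle rather than just a closed walk, so that we may legitimately speak of $Q$ as an element of $\mathcal{G}_{|f|}$. For the templates in $\TT$ this is essentially automatic: $\kappa$ is a homeomorphism, so $C$ inherits its combinatorial type from the boundary walk of $f$ in $G_T$, and the $2$-connectedness and representativity given by Theorem~\ref{thm-centor} together with the structure of our templates force the face boundaries of $G_T$ to be simple cycles. If one wanted to be entirely safe, a repeated vertex or edge on the boundary would let us cut $\Lambda$ into strictly smaller sub-disks bounded by closed walks of length less than $|f|$, and the same $\SS_k$ argument (with $k<|f|$) would apply to each piece.
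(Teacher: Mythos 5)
Your argument is essentially the paper's own: apply Lemma~\ref{SubgrCrit} to obtain $C$-criticality of the subgraph $Q$ drawn in the closure of $\kappa(f)$, conclude $\theta_T(f)\in\SS_{|f|}$, and derive the contradiction from $\SS_4=\emptyset$ when $|f|=4$ and from $\max I\le |f|-2$ when $\theta_T(f)=\{|f|\}$. The additional worry in your last paragraph about the boundary being a simple cycle is something the paper's proof does not raise (it passes directly from Lemma~\ref{SubgrCrit} to $Q\in\mathcal{G}_k$), and your two proposed fixes are somewhat loose -- Theorem~\ref{thm-centor} gives $2$-connectedness of $G$, not of the subgraph $\kappa(G_T)$, so it does not by itself force a simple boundary cycle, and the ``cut $\Lambda$ into sub-disks'' fallback would still need to track how $\theta_T(f)$ distributes among the pieces and handle sub-disks that happen to be faces -- but none of this changes the fact that your main chain of deductions coincides with the paper's.
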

\begin{proof}
Let $k$ be the length of $f$.  Suppose for a contradiction $\kappa(f)$ is not a face of $G$,
and let $Q$ be the subgraph of $G$ drawn in the closure of $\kappa(f)$.
By Lemma~\ref{SubgrCrit}, we have $Q\in\mathcal{G}_k$, and thus the census of $Q$ belongs to $\SS_k$.
Since $\SS_4=\emptyset$, this implies $k>4$.
By Lemma~\ref{lemma-sset}, we have $\max(\theta_T(f))=\max(\CC(Q))\le k-2$, contradicting the assumption $f$ is proper.
\end{proof}
The simple templates we defined in the introduction thus correspond exactly to the templates $T$ such that
every face $f$ with $\theta_T(f)\neq\emptyset$ is proper; we will call the templates with this property \emph{direct}.
A template $T$ is \emph{relevant} if
\begin{itemize}
\item the graph $G_T$ is triangle-free,
\item $\bigcup_{f\in F(G_T)} \theta_T(f) \in \{\emptyset, \{5,5\}, \{5,5,5,5\},\{5,5,6\},\{5,7\}\}$, and
\item for every face $f$, $\theta_T(f) \in \SS_{|f|}\cup\{\{|f|\}\}$ if $|f| \neq 4$ and $\theta_T(f) = \emptyset$ if $|f| = 4$. 
\end{itemize}

By Lemma~\ref{SubgrCrit} and Theorem~\ref{thm-centor}, if a triangle-free
$4$-critical graph is represented by a template $T$, then $T$ is relevant.

\subsection{Winding numbers and 3-colorability of templates}\label{sec-winding}

Consider a graph with a proper coloring $\varphi$ by colors $\{0,1,2\}$.  For adjacent vertices $u$ and $v$, we define
$$\omega_\varphi(u,v)=\begin{cases}
1&\text{ if $\varphi(v)-\varphi(u)\equiv 1\pmod 3$}\\
-1&\text{ if $\varphi(v)-\varphi(u)\equiv -1\pmod 3$.}\\
\end{cases}$$
For a walk $W=v_0v_1\ldots v_m$, we define $\omega_\varphi(W)=\sum_{i=1}^m\omega_\varphi(v_{i-1},v_i)$.
Note that if $W$ is a cycle, then $\varphi$ can be extended in a natural way to a continuous mapping from the drawing of the cycle
to a triangle in the plane, and in this representation $\omega_\varphi(W)$ is equal to three times the winding number
of the corresponding closed curve around the interior of the triangle; thus, we will call $\omega_\varphi(W)$ the \emph{winding number}
of $\varphi$ on $W$ (ignoring the factor of three for convenience).

For a $2$-cell face $f$, let $W$ be the closed walk bounding $f$ in the clockwise direction, and define $\omega_\varphi(f)=\omega_\varphi(W)$.
Let us note two basic properties of the winding number.

\begin{observation}\label{obs-winsum}
Let $G$ be a graph $2$-cell embedded in an orientable surface and let $H$ be a subgraph of $G$.  Let $\varphi$ be a $3$-coloring of $G$.
Let $f$ be a $2$-cell face of $H$.  Then
$$\omega_\varphi(f)=\sum_{g\in F(H),g\subseteq f} \omega_\varphi(g).$$
Furthermore,
$$\sum_{f\in F(G)} \omega_\varphi(f)=0.$$
\end{observation}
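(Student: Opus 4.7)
The plan is to base everything on the antisymmetry $\omega_\varphi(v,u)=-\omega_\varphi(u,v)$, which is immediate from the definition since $\varphi(u)-\varphi(v)\equiv -(\varphi(v)-\varphi(u))\pmod 3$. Both assertions then become instances of the familiar cancellation principle: when oriented face boundaries on an orientable surface are summed, each edge contributes once in each direction and thereby cancels, except on the outer boundary (if any).

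For the first claim, I would fix the $2$-cell face $f$ of $H$ and consider the collection $\mathcal{F}_f$ of faces of $G$ contained in $f$. Since $G$ is $2$-cell embedded and $H\subseteq G$, the faces in $\mathcal{F}_f$ partition the open region $f$, and the edges of $G$ drawn in the closure of $f$ split into two groups: those lying on the boundary walk of $f$ in $H$ and those lying strictly inside $f$. An interior edge bounds two face-sides from $\mathcal{F}_f$ and is therefore traversed once in each direction when we sum clockwise boundary walks over $\mathcal{F}_f$; by antisymmetry those contributions cancel. Edges on the boundary walk of $f$ are traversed exactly once, in the same direction as in the clockwise boundary of $f$ in $H$. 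Summing $\omega_\varphi$ over the clockwise boundaries of the faces in $\mathcal{F}_f$ therefore leaves precisely $\omega_\varphi(f)$.

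For the second claim, I would apply the same cancellation globally: since the torus is an orientable closed surface and $G$ is $2$-cell embedded, every edge of $G$ lies on exactly two face-sides, and a consistent clockwise orientation of all faces makes every edge appear once in each direction across the whole sum. All contributions cancel, yielding $\sum_{f\in F(G)}\omega_\varphi(f)=0$.

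The only subtlety I anticipate is careful accounting of edges that are traversed by a single face of $\mathcal{F}_f$ on both sides (e.g., when a tree-like piece of $G$ sticks into $f$): such an edge appears twice in the clockwise boundary walk of that face, once in each direction, so antisymmetry still makes it cancel. This is not a genuinely new case, but it has to be acknowledged so that the edge-cancellation argument is airtight. Everything else is combinatorial bookkeeping, and orientability of the torus is what guarantees that the clockwise orientations can be chosen consistently across all faces.
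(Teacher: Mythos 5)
Your proof is correct, and since the paper leaves this observation without proof (treating it as standard), you have supplied the intended argument: antisymmetry of $\omega_\varphi$ plus edge-cancellation when summing oriented face boundaries. You correctly identify the two places where care is needed — that orientability guarantees a consistent choice of ``clockwise'' across all faces (so that an edge shared by $g \subseteq f$ and the boundary of $f$ is traversed in the same direction in both walks, while an edge shared by two faces of $G$ is traversed in opposite directions), and that a bridge edge appearing twice in a single face's boundary walk still cancels by antisymmetry. No gaps.
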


\begin{observation}\label{obs-winvalues}
Let $\varphi$ be a $3$-coloring and let $W$ be a closed walk of length $m$.  Then
$3|\omega_\varphi(W)$, $\omega_\varphi(W)\equiv m\pmod 2$, and $|\omega_\varphi(W)|\le m$.
In particular, if $W$ has length $4$, then $\omega_\varphi(W)=0$.
\end{observation}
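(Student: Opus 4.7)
The plan is to verify each of the three claimed properties of $\omega_\varphi(W)$ separately, where $W=v_0v_1\cdots v_m$ with $v_m=v_0$, and then combine them to obtain the $m=4$ consequence. Each property reflects a different constraint on the sum $\omega_\varphi(W)=\sum_{i=1}^m \omega_\varphi(v_{i-1},v_i)$ of $\pm 1$ contributions, and none of them requires topological input; the observation is purely combinatorial.

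First I would handle the bound $|\omega_\varphi(W)|\le m$ by the triangle inequality applied to the sum, since each summand has absolute value one. The parity claim $\omega_\varphi(W)\equiv m\pmod 2$ is just as immediate, as $\omega_\varphi(W)$ is a sum of $m$ integers each of which is odd, so its parity equals that of $m$.

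The key ingredient is divisibility by three. Here I would use the fact that, from the definition, $\omega_\varphi(v_{i-1},v_i)\equiv \varphi(v_i)-\varphi(v_{i-1})\pmod 3$ in both of the two cases of the definition (the value $+1$ is congruent to $\varphi(v_i)-\varphi(v_{i-1})\equiv 1\pmod 3$, and the value $-1$ is congruent to $\varphi(v_i)-\varphi(v_{i-1})\equiv -1\pmod 3$). Summing over $i=1,\ldots,m$, the right-hand side telescopes to $\varphi(v_m)-\varphi(v_0)$, which vanishes since $W$ is closed; hence $3\mid \omega_\varphi(W)$.

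Finally, for the $m=4$ statement I would simply combine the three facts above: $\omega_\varphi(W)$ is an integer divisible by $3$, congruent to $0\pmod 2$, and of absolute value at most $4$. The only integer satisfying all three conditions is $0$. I do not expect any real obstacle in this argument; the only subtlety is ensuring the congruence identity for individual edges is stated correctly before telescoping.
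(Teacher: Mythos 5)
Your argument is correct and complete. The paper states this observation without proof, treating it as immediate; your verification---triangle inequality for the bound, parity of a sum of $m$ odd terms, and telescoping $\omega_\varphi(v_{i-1},v_i)\equiv\varphi(v_i)-\varphi(v_{i-1})\pmod 3$ around the closed walk for divisibility by three, then combining all three for the $m=4$ case---is exactly the standard argument being left implicit.
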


We need the following important result.
For a set $F$ of faces and an integer $k$, a \emph{winding number assignment summing to $k$} is a function $n:F\to\mathbb{Z}$ such that
$\sum_{f\in F} n(f)=k$ and for every $f\in F$, $n(f)$ is divisible by $3$, $n(f)$ has the same parity as $|f|$,
and $|n(f)|\le |f|$.

\begin{lemma}[{Dvořák and Lidický~\cite[a reformulation of Lemma~5]{col8cyc}}]\label{lemma-disk}
Let $G$ be a graph with a $2$-cell drawing in an orientable surface, let $H$ be a subgraph of $G$, and let $f$ be a $2$-cell face of $H$.
Let $\varphi$ be a $3$-coloring of $H$ and let $F$ be the set of faces of $G$ contained in $f$.
The coloring $\varphi$ does not extend to a $3$-coloring of
the subgraph of $G$ drawn in the closure of $f$ if and only if for every winding number assignment $n$ for $F$ summing to
$\omega_\varphi(f)$, either
\begin{itemize}
\item[\textup{(i)}] there exists a path $P$ in $G$ drawn in $f$ and intersecting the boundary of $f$ exactly in its endpoints, and denoting by $Q$ a part of the
clockwise boundary walk of $f$ between the endpoints of $P$ and by $F'\subseteq F$ the set of faces contained in the part of $f$ bounded by $Q+P$,
we have $$\Bigl|\omega_\varphi(Q)-\sum_{f\in F'} n(f)\Bigr|>|E(P)|;$$
or,
\item[\textup{(ii)}] there exists a cycle $C$ in $G$ drawn in $f$ and intersecting the boundary of $f$ in at most one point, and denoting
by $F'\subseteq F$ the set of faces contained in the part of $f$ bounded by $C$, we have
we have $$\Bigl|\sum_{f\in F'} n(f)\Bigr|>|C|.$$
\end{itemize}
\end{lemma}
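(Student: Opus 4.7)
The plan is to prove the biconditional by handling the ``extension implies no obstruction'' direction via direct winding-number arithmetic, and the reverse direction by induction on the number of interior faces, reducing to smaller sub-disks by cutting along an interior path or cycle.

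\textbf{Easy direction.} Suppose $\varphi$ extends to a $3$-coloring $\varphi'$ of the subgraph of $G$ drawn in the closure of $f$. Define $n(g) := \omega_{\varphi'}(g)$ for each $g \in F$. By Observation~\ref{obs-winvalues}, each $n(g)$ is divisible by $3$, has the parity of $|g|$, and satisfies $|n(g)| \le |g|$, so $n$ is a valid winding number assignment; and Observation~\ref{obs-winsum} applied inside $f$ gives
$$\sum_{g \in F} n(g) = \omega_{\varphi'}(f) = \omega_\varphi(f),$$
since the boundary walk of $f$ lies in $H$. For any path $P$ as in (i) with boundary arc $Q$ and enclosed faces $F'$, the closed walk $Q + P^{-1}$ bounds the sub-region containing $F'$, yielding
$$\omega_\varphi(Q) - \sum_{g \in F'} n(g) = \omega_{\varphi'}(P),$$
and $|\omega_{\varphi'}(P)| \le |E(P)|$ by Observation~\ref{obs-winvalues}; the same reasoning applied to a cycle $C$ as in (ii) gives $|\sum_{g \in F'} n(g)| = |\omega_{\varphi'}(C)| \le |C|$. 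Thus neither obstruction holds for this $n$.

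\textbf{Hard direction.} I would argue by contrapositive: given $n$ summing to $\omega_\varphi(f)$ with no obstruction of type (i) or (ii), construct an extension $\varphi'$ by induction on $|F|$. In the base case $|F|=1$, the unique interior face $g$ has $\omega_\varphi(\partial g) = n(g)$ with $3 \mid n(g)$, the correct parity, and $|n(g)| \le |g|$, which suffices to color the interior vertices of $g$ consistently by a direct cyclic construction around $\partial g$. For $|F| \ge 2$, pick a shortest interior path $P$ (or cycle $C$) of $G$ splitting $f$ into two pieces, with $F$ partitioned as $F_1 \cup F_2$. The failure of (i) ensures the target winding number on $P$ (namely $\omega_\varphi(Q) - \sum_{g \in F_1} n(g)$) has absolute value at most $|E(P)|$ and the correct parity and divisibility, so a $3$-coloring of the internal vertices of $P$ realizing this target exists; the analogous statement for $C$ uses (ii). Apply the induction hypothesis to each sub-disk, using the restriction of $n$ to $F_i$ and the boundary coloring enriched by the chosen coloring of the cut.

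\textbf{Main obstacle.} The technical heart is verifying that the non-obstruction hypothesis is inherited by each sub-disk: any candidate path or cycle obstruction inside a sub-disk of $f$ must either lift to a corresponding obstruction inside $f$ (contradicting the hypothesis) or be ruled out by the minimality of the chosen cut. Concretely, a violating inequality in a sub-disk should be transferable by concatenating the candidate obstruction with a portion of $P$ or $C$ to yield a violating obstruction in $f$, using that winding numbers along paths satisfy the triangle-type bound from Observation~\ref{obs-winvalues}. Once this propagation is set up carefully, the induction closes; the principal subtlety is that the inequalities in (i) and (ii) are tightly matched to the allowable range of winding numbers on paths and cycles, so that one can always choose a cut coloring with the correct winding number while keeping the sub-problems obstruction-free.
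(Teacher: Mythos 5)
Note first that the paper does not prove this lemma at all; it is imported verbatim as a reformulation of Lemma~5 of Dvořák and Lidický~\cite{col8cyc}, so there is no in-paper argument to compare your attempt against. Your ``easy direction'' is correct and is the standard computation: setting $n(g)=\omega_{\varphi'}(g)$ and applying Observations~\ref{obs-winsum} and~\ref{obs-winvalues} along $Q+P^{-1}$ (resp.\ along $C$) is exactly right.

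The ``hard direction'', however, has a genuine gap precisely at the point you flag as the ``main obstacle'', and the repair you sketch does not work. After cutting $f$ along $P$, an obstruction inside the sub-disk $f_1$ may be a pair $(P',Q')$ where $Q'$ is a subpath $P''$ of $P$ itself (not of the original boundary). Your proposed transfer --- concatenating $P'$ with $P''$ to get a cycle $C'$ in $f$ --- produces the $f$-obstruction condition $|\sum_{g\in F'} n(g)|>|E(P')|+|E(P'')|$, whereas the sub-disk obstruction $|\omega_{\varphi'}(P'')-\sum_{g\in F'} n(g)|>|E(P')|$ combined with $|\omega_{\varphi'}(P'')|\le|E(P'')|$ only yields $|\sum_{g\in F'} n(g)|>|E(P')|-|E(P'')|$. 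This is strictly weaker than what is needed; the inequality runs in the wrong direction, and choosing $P$ shortest does not fix it. The underlying issue is that the non-obstruction hypothesis in $f$ only pins down the \emph{total} winding number $\omega_{\varphi'}(P)$, whereas avoiding obstructions in $f_1$ and $f_2$ constrains the winding numbers of \emph{all subpaths} of $P$. One must show that the system of interval constraints on partial sums along $P$ imposed by both sub-disks is simultaneously feasible, and that infeasibility would force an obstruction in $f$; this is a Hoffman-circulation-type argument, which your sketch neither states nor carries out. Until this feasibility/transfer step is made precise, the induction does not close.
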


For a multiset $I$ of positive integers, let us analogously define
a \emph{winding number assignment summing to $k$} as a function $n:I\to\mathbb{Z}$ such that $\sum_{i\in I} n(i)=k$
and for every $i\in I$, $n(i)$ is divisible by $3$, $n(i)$ has the same parity as $i$, and $|n(i)|\le i$.
Let $\omega(I)$ be the set of all integers $k$ such that there exists a winding number assignment for $I$ summing to $k$.
A \emph{proper $3$-coloring} of a template $T$ is a proper $3$-coloring $\varphi$ of $G_T$ such that $\omega_\varphi(f)\in\omega(\theta_T(f))$ for every face $f\in F(G_T)$.
A justification for this definition is the following fact.

\begin{lemma}\label{lemma-col}
Let $T$ be a template and let $\varphi$ be a proper $3$-coloring of $G_T$.  Then $\varphi$ is a proper $3$-coloring of $T$
if and only if there exists a graph $G$ represented by $T$ (via a homeomorphism $\kappa$) such that $\varphi\circ\kappa^{-1}$ extends to a proper $3$-coloring of $G$.
\end{lemma}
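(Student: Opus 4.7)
The plan is to prove the two implications separately. The \emph{only if} direction is immediate from the winding-number observations of Section~\ref{sec-winding}, while the \emph{if} direction requires constructing a representing graph and is the main content of the lemma.

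For the \emph{only if} direction, suppose $G$ is represented by $T$ via $\kappa$ and that $\psi := \varphi\circ\kappa^{-1}$ extends to a proper $3$-coloring $\psi'$ of $G$. For each face $f$ of $G_T$, Observation~\ref{obs-winsum} applied to $G$ with the subgraph $\kappa(G_T)$ gives
$$\omega_\varphi(f)=\omega_{\psi'}(\kappa(f))=\sum_{g\in F(G),\ g\subseteq\kappa(f)}\omega_{\psi'}(g).$$
By Observation~\ref{obs-winvalues}, the $4$-faces contribute $0$; the remaining faces have lengths forming the multiset $\theta_T(f)$, and each of their winding numbers satisfies the divisibility, parity, and magnitude bounds. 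Hence the map from $\theta_T(f)$ to $\mathbb{Z}$ sending each $i$ to the winding number of the corresponding face is a winding number assignment summing to $\omega_\varphi(f)$, yielding $\omega_\varphi(f)\in\omega(\theta_T(f))$.

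For the \emph{if} direction, assume $\varphi$ is a proper $3$-coloring of $T$. I would take $\kappa$ to be the identity and construct $G$ by independently filling each face $f$ of $G_T$. For each $f$, fix a winding number assignment $n_f\colon\theta_T(f)\to\mathbb{Z}$ summing to $\omega_\varphi(f)$, which exists by hypothesis. I would build a triangle-free $2$-cell subgraph $G_f$ drawn in the closed disk $f$, containing the boundary of $f$, whose interior faces have census exactly $\theta_T(f)$ with the face of length $i$ having winding number $n_f(i)$, and such that $\varphi|_{\partial f}$ extends to a proper $3$-coloring of $G_f$. The union of $G_T$ and all $G_f$ then yields a graph $G$ represented by $T$ to which $\varphi$ extends.

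The construction of each $G_f$ is the principal obstacle, which I would handle by induction on $|\theta_T(f)|$. For the inductive step, given $i \in \theta_T(f)$ with target winding $w=n_f(i)$, the constraints $3\mid w$, $w\equiv i\pmod 2$, and $|w|\le i$ are exactly what is needed to realize a $3$-colored cycle of length $i$ with winding $w$ that shares an edge with the boundary of $f$; after picking a suitably oriented boundary edge $uv$ for which $\omega_\varphi(u,v)$ is compatible with $w$, this cycle can be attached inside $f$ as a path of length $i-1$ from $u$ to $v$ with fresh interior vertices, thereby splitting off the desired $i$-face and leaving a residual region with census $\theta_T(f)\setminus\{i\}$ and winding $\omega_\varphi(f)-w$ for which the inductive hypothesis applies. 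The base case $\theta_T(f)=\emptyset$ is the classical statement that any $3$-colored closed walk of even length with winding number zero bounds a disk admitting a triangle-free quadrangulation extending the coloring; this can be established by a secondary induction on the boundary length using local modifications (identifying matched boundary vertices or attaching a single internal vertex along alternating boundary edges), whose applicability is guaranteed by the winding-zero condition. Throughout, triangle-freeness is maintained by using fresh interior vertices for every attached path or gadget.
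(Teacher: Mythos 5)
Your argument for the direction in which a representing graph with an extendable coloring is assumed follows the paper closely: Observation~\ref{obs-winsum} decomposes $\omega_\varphi(f)$ over the faces of $G$ inside $\kappa(f)$, and Observation~\ref{obs-winvalues} shows the non-quadrilateral face windings give a winding number assignment for $\theta_T(f)$. There is one slip, though: you write $\omega_\varphi(f)=\omega_{\psi'}(\kappa(f))$, which is false when $\kappa$ reverses the orientation of the torus; one then has $\omega_\varphi(f)=-\omega_{\psi'}(\kappa(f))$. The paper handles this with an explicit sign $s\in\{\pm1\}$. Your conclusion $\omega_\varphi(f)\in\omega(\theta_T(f))$ still holds because $\omega(\theta_T(f))$ is closed under negation (negate the whole assignment), but as written the displayed equality is wrong and the argument needs that extra sentence. (Minor: you also have the labels swapped---the direction you call \emph{only if} is the \emph{if} direction of the biconditional, and vice versa.)

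For the converse---constructing a representing graph whose coloring extends $\varphi$---the paper deliberately gives only a one-sentence sketch (fill the faces with ``suitably generic'' subgraphs of the prescribed census and invoke Lemma~\ref{lemma-disk}) and leaves the details to the reader, noting this implication is never used. Your route is genuinely different: rather than building a generic graph and appealing to Lemma~\ref{lemma-disk} as a black box, you construct the extension by hand, inductively attaching one cycle of prescribed length and winding at a time and finishing with a winding-zero quadrangulation. This is a reasonable outline, but as stated the inductive step has a gap: to split off an $i$-face of winding $w=n_f(i)$ through a boundary edge $uv$ you need $|w-\omega_\varphi(u,v)|\le i-1$, and an intermediate boundary need not contain an edge of the required sign (for instance when every current boundary edge carries the same $\omega_\varphi$-value and $n_f(i)$ takes the opposite extreme). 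This can presumably be repaired by a more careful choice of the winding number assignment $n_f$ or of the order in which the cycles are attached, but you have not argued it, and the claim ``the constraints \dots are exactly what is needed'' glosses over it. Since the paper itself sidesteps this direction, the omission is not fatal, but it is the place where a real argument would still have to be supplied.
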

\begin{proof}
Suppose that there exists a graph $G$ represented by $T$ with a $3$-coloring $\psi$ extending $\varphi\circ\kappa^{-1}$, i.e., $\varphi(v)=\psi(\kappa(v))$ for every $v\in V(G_T)$.
Let $s=1$ if $\kappa$ preserves orientation and $s=-1$ if $\kappa$ reverses the orientation.
For any face $f\in F(G_T)$, let $W_f$ denote the closed walk of $G$ bounding $\kappa(f)$ in the clockwise direction and let $F_f(G)$ denote the set of faces of $G$ contained in $\kappa(f)$.
By Observation~\ref{obs-winsum}, we have
$$\omega_\varphi(f)=s\cdot\omega_\psi(\kappa(W_f))=s\cdot \sum_{g\in F_f(G)} \omega_\psi(g).$$
Since $\theta_T(f)$ is the census of $F_f(G)$, Observation~\ref{obs-winvalues} implies $\omega_\varphi(f)\in\omega(\theta_T(f))$.  Hence, $\varphi$ is a proper $3$-coloring of $T$.

The converse is proved by filling in the faces of $G_T$ by a suitably generic subgraphs (avoiding short paths between points on the boundary and short separating cycles)
with faces of appropriate census, so that Lemma~\ref{lemma-disk} implies $\varphi$ extends to a $3$-coloring of these subgraphs.  As we will not need this implication, the details are left to the reader.
\end{proof}

We say that a template $T$ is \emph{$3$-colorable} if it has a proper $3$-coloring.  Note that not all realizations of a $3$-colorable template are $3$-colorable,
but the converse is true, as is easy to see from Lemma~\ref{lemma-col}.

\begin{corollary}\label{cor-col}
If $G$ is represented by a template $T$ and $T$ is not $3$-colorable, then $G$ is not $3$-colorable.
\end{corollary}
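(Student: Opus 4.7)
The plan is to argue by contrapositive, directly invoking the forward direction of Lemma~\ref{lemma-col}, which is the direction that was fully proved in the excerpt (the converse was left to the reader). The statement of Corollary~\ref{cor-col} is essentially a restatement of the forward implication of Lemma~\ref{lemma-col} applied in its contrapositive form, so no new ideas are needed.

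Concretely, I would suppose that $G$ is represented by $T$ via a homeomorphism $\kappa$, and suppose for contradiction that $G$ admits a proper $3$-coloring $\psi$. Since $\kappa$ maps $G_T$ to a subgraph of $G$, I would pull $\psi$ back by defining $\varphi(v) \colonequals \psi(\kappa(v))$ for each $v \in V(G_T)$. Because $\psi$ properly $3$-colors $\kappa(G_T) \subseteq G$, the composition $\varphi$ is a proper $3$-coloring of the graph $G_T$.

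Now $\psi$ is a proper $3$-coloring of $G$ extending $\varphi \circ \kappa^{-1}$, so the hypothesis of the ``if'' part of Lemma~\ref{lemma-col} is met, and therefore $\varphi$ is a proper $3$-coloring of the template $T$. This contradicts the assumption that $T$ is not $3$-colorable, finishing the proof.

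There is no real obstacle here: the winding-number bookkeeping that makes Corollary~\ref{cor-col} nontrivial has already been absorbed into Lemma~\ref{lemma-col}. The only thing to be careful about is that in the definition of ``represented by a template'' the homeomorphism $\kappa$ may reverse orientation; however, this was already addressed in the proof of Lemma~\ref{lemma-col} by the sign $s \in \{-1,+1\}$, and $\omega(\theta_T(f))$ is symmetric around $0$ so the conclusion is insensitive to this sign. Thus the corollary follows in a few lines without any additional combinatorial work.
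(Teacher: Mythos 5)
Your proof is correct and takes exactly the route the paper has in mind: the corollary is stated with no explicit proof, only the remark that it ``is easy to see from Lemma~\ref{lemma-col},'' and you have simply unfolded that remark by pulling back a hypothetical $3$-coloring of $G$ along $\kappa$ and applying the proved (``if'') direction of Lemma~\ref{lemma-col}. Your side comment about orientation-reversal and the symmetry of $\omega(\theta_T(f))$ is accurate but not needed here, since that sign issue is already handled inside the proof of Lemma~\ref{lemma-col}.
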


Testing whether a template is $3$-colorable is of course NP-hard, but for reasonably small templates,
a brute-force algorithm enumerating all proper $3$-colorings of $G_T$ and testing the winding number conditions afterwards is fast enough for our purposes.

\subsection{Operations on templates}

Throughout the rest of this section, let $T$ be a template.

\subsubsection*{Hiding, revealing, subtemplates}
Let $e$ be an edge of $G_T$, where either $e$ is incident with two distinct faces or with a vertex $v$ of degree one.
Then $T\diamond e$ denotes a template obtained from $T$ as follows: in the former case $G_{T\diamond e}=G_T-e$,
in the latter case $G_{T\diamond e}=G_T-v$.
Let $f_e$ denote the face of $G_T-e$ in which $e$ used to be drawn, and let $X$ be the set of (at most two) faces of $G_T$ incident with $e$.
The function $\theta_{T\diamond e}$ matches $\theta_T$ on $F(G_T)\setminus X$, and $\theta_{T\diamond e}(f_e)=\bigcup \theta_T(X)$.
We say that $T\diamond e$ is obtained from $T$ by \emph{hiding} the edge $e$.  Conversely, we say that $T$ is obtained from $T\diamond e$ by \emph{revealing the edge $e$}.
Note that revealing edge may add new vertex of degree one into the template.

A template $T'$ is a \emph{subtemplate} of $T$ if a template homeomorphic to $T'$ is obtained from $T$ by repeatedly hiding edges.
Equivalently, there exists a homeomorphism $\kappa$ of the torus mapping $G_{T'}$
to a subgraph of $G_T$, such that for each face $f'$ of $G_{T'}$, we have
$$\theta_{T'}(f')=\bigcup_{f\in F(G_{T}),f\subseteq\kappa(f')} \theta_T(f).$$
Note that if a graph is represented by $T$, then it is also represented by $T'$.

\subsubsection*{Splitting}
Let $I$ be a multiset of integers.
We say a multiset $A$ is obtained from $I$ by \emph{splitting}
if $A$ is obtained from $I$ by
\begin{itemize}
\item removing an element of value $6$, or
\item replacing an element of value $i\ge 7$ by an element of value $i-2$, or
\item replacing an element of value $i\ge 8$ by two elements of values $i_1$ and $i_2$ such that $i_1,i_2\ge 5$ and $i_1+i_2=i+2$.
\end{itemize}
For a face $f\in F(G_T)$ with $\theta_T(f)\neq\emptyset$ and $\max\theta_T(f)\ge 6$, we say a template $T'$ is obtained from $T$ by
\emph{splitting inside $f$} if $G_{T'}=G_T$, $\theta_{T'}(g)=\theta_T(g)$ for $g\in F(G_T)\setminus\{f\}$,
and $\theta_{T'}(f)$ is obtained from $\theta_T(f)$ by splitting.  Note that if $H$ is a triangle-free graph represented by $T$ via a homeomorphism $\kappa$
and we add a chord to a face $h$ of $H$ (splitting it into two faces) so that the resulting graph $H'$ is triangle-free, then
$H'$ is represented by a template obtained from $T$ by splitting inside $f$, where $f$ is the face of $G_T$ such that $h\subseteq\kappa(f)$.

\subsubsection*{Filling}
We say a multiset $A$ is a \emph{filling} of a multiset $I$ if $A$ is obtained from $I$ by replacing each element $i\in I$ by
the elements of a multiset belonging to $\{\{i\}\}\cup \SS_i$.
We say that a template $T'$ is obtained from $T$ by \emph{filling} if $G_{T'}=G_T$
and $\theta_{T'}(f)$ is a filling of $\theta_T(f)$ for $f\in F(G_T)$.
By the definition of the sets $\SS_i$ and by Lemma~\ref{SubgrCrit}, if $H$ is a graph represented by $T$
and $H'$ is a $4$-critical triangle-free supergraph of $H$ in the torus, then $H'$ is represented
by a filling of $T$.

\subsubsection*{Boosting}
We say a multiset $A$ is obtained from a multiset $I$ by \emph{boosting} if either $A=I$ or
$A$ is obtained from $I$ by replacing an element of value $i$ by the elements of some multiset from $\SS_{i+2}$.
This operation has the following interpretation.  Let $G$ be a 4-critical triangle-free graph drawn in the torus.
Suppose $G_1$ is a subgraph of $G$ with a $2$-cell drawing in the torus and
$v_2zv_4$ is a path in the boundary of a face $f_0$ of $G_1$.
Let $H$ be the graph obtained from $G_1$ by identifying $v_2$ with $v_4$ within $f_0$ to a new vertex $v$ and suppressing the resulting $2$-face $vz$
and let $f^H_0$ be the face of $H$ corresponding to $f_0$, of length $k$.
Suppose $H$ is represented by a template $T$ via a homeomorphism $\kappa$ mapping an edge $v'z'\in E(G_T)$ to $vz$
and let $f'_0$ be the face of $G_T$ such that $f^H_0\subseteq \kappa(f'_0)$.
Let $T_1$ be the template obtained from $T$ as follows: $G_{T_1}$ is the graph created from $G_T$ by adding an edge $e'$ parallel to $v'z'$
and splitting the vertex $v'$ into two vertices $v'_2$ and $v'_4$ so that the $2$-face bounded by $e'$ and $v'z'$
merges with $f'_0$.  For any face $f\in F(G_{T_1})\setminus \{f'_0\}$, we let $\theta_{T_1}(f)=\theta_T(f)$.
If $f^H_0$ is a $4$-face, let $\theta_{T_1}(f'_0)=\theta_T(f'_0)\cup\{6\}$, otherwise
let $\theta_{T_1}(f'_0)=(\theta_T(f'_0)\setminus\{k\})\cup\{k+2\}$.
Then $G_1$ is represented by $T_1$ via the homeomorphism $\kappa'$.
Suppose now moreover that $f_0$ is not a face of $G$, and let $G_2$ be the subgraph of $G$ obtained from $G_1$ by adding
all vertices and edges of $G$ drawn in $f_0$.  Since $|f_0|=k+2$, Lemma~\ref{SubgrCrit} implies $G_2$ is represented by
a template $T_2$ obtained from $T_1$ by replacing $k+2$ in $\theta_{T_1}(f'_0)$ by the elements of some multiset from $\SS_{k+2}$.
Therefore, $\theta_{T_2}(f'_0)$ is obtained from $\theta_T(f'_0)$ by boosting.  Let us remark that if $k=4$, then since $\SS_6=\{\emptyset\}$,
we have $\theta_{T_2}(f'_0)=\theta_{T_1}(f'_0)\setminus\{6\}=\theta_T(f'_0)$, justifying the possibility of $A=I$ in the definition
of boosting.

\subsubsection*{Partial amplification}
Note that a vertex $v$ of a graph $G$ can appear in the boundary of a face $f$ several times.  The following definition
is used to indicate a particular incidence of $f$ with $v$.  We fix an open neighborhood $\delta$ of $v$ small enough so
that no other vertex appears in $\delta$, and for each edge $e$ intersecting $\delta$, $e$ is incident with $v$ and $e\cap \delta$ is an initial
segment of $e$ starting in $v$.  An \emph{angle} of $f$ at $v$ is an arcwise-connected subset $a$ of $\delta$ after removing the drawing of $G$ 
such that $a\subset f$.
A \emph{partial amplification} of $T$ is a template obtained in one of the following ways:
\begin{itemize}
\item[\textrm{(i-a)}] For some face $f\in F(G_T)$, we change $\theta_T(f)$ to a multiset obtained from it by boosting.
\item[\textrm{(i-b)}] For a vertex $v$ of $G_T$, we first either choose an edge $e$ incident with $v$, or reveal an edge $e$ incident with
$v$.  Then we choose an angle $a$ of a face $f$ at $v$, add an edge $e'$ parallel to $e$ so that $e$ and $e'$ bound a $2$-face $f'$,
and split the vertex $v$ into two vertices so that $f'$ merges with the angle $a$.  Finally, we change $\theta_T(f)$ to
a multiset obtained from it by boosting.
\item[\textrm{(ii-a)}] For some face $f\in F(G_T)$, we split a face inside $f$ and then change $\theta_T(f)$ to a multiset obtained from it by boosting.
\item[\textrm{(ii-b)}] For a vertex $v$ of $G_T$ and incident face $f$, we split a face inside $f$, then reveal an edge $e$ incident with $v$ and drawn in $f$.
Then we choose an angle $a$ of a face $f'$ at $v$, add an edge $e'$ parallel to $e$ so that $e$ and $e'$ bound a $2$-face $f''$,
and split the vertex $v$ into two vertices so that $f''$ merges with the angle $a$.  Finally, we change $\theta_T(f')$ to
a multiset obtained from it by boosting.
\item[\textrm{(iii-a)}] For some face $f\in F(G_T)$, we change $\theta_T(f)$ to a multiset obtained from it by boosting twice.
\item[\textrm{(iii-b)}] For a vertex $v$ of $G_T$ and distinct angles $a_1$ and $a_2$ of (not necessarily distinct) faces $g_1$ and $g_2$ at $v$,
we split $v$ into two vertices $v_2$ and $v_4$ and add a new vertex $z$ and a path $v_2zv_4$ so that the angles $a_1$ and $a_2$ now extend along this path.
Then we change $\theta_T(g_1)$ and $\theta_T(g_2)$ to multisets obtained from them by boosting (boosting twice when $g_1=g_2$).
\end{itemize}
Comparing this definition with Lemma~\ref{lemma-unredu} (the ``a'' cases corresponding to the situation where the vertex
$v$ discussed in the Lemma does not belong to $G_T$) and using the interpretations of the operations
of revealing an edge, splitting and boosting we introduced in this section, we conclude the following lemma holds.
\begin{lemma}\label{lemma-partex}
Let $G$ be a $4$-critical triangle-free graph drawn in the torus and let $H$ be a $4$-critical subgraph of a graph
obtained from $G$ by collapsing a $4$-face; suppose $H$ is triangle-free and let $G_1\subseteq G$, $f_0$ and possibly $f_1$ be as described
in Lemma~\ref{lemma-unredu}.  Let $G_2$ be the subgraph of $G$ obtained from $G_1$ by, for $i\in\{0,1\}$,
adding the vertices and edges of $G$ drawn in $f_i$.  If $H$ is represented by a template $T$, then
$G_2$ is represented by a partial amplification of $T$.
\end{lemma}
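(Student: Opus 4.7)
The plan is to trace through each case of Lemma~\ref{lemma-unredu} and verify that the corresponding transformation at the template level falls under one of the six clauses in the definition of partial amplification. I would fix a homeomorphism $\kappa$ witnessing that $H$ is represented by $T$, so that $\kappa(G_T)\subseteq H$ and every vertex or edge of $H$ outside $\kappa(G_T)$ lies strictly inside a face of $\kappa(G_T)$. The overall strategy is then to build the desired template in two conceptual steps: first, describe a template $T_1$ that represents the intermediate graph $G_1$ of Lemma~\ref{lemma-unredu}; then, account for the extra content of $G_2$ inside $f_0$ (and, in case (iii), inside $f_1$) by boosting the appropriate face multisets.

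The case analysis proceeds along two axes. The first is which of the cases (i), (ii), (iii) of Lemma~\ref{lemma-unredu} applies. The second is whether the identified vertex $v$ of $H$ belongs to $\kappa(V(G_T))$ or lies strictly inside a face of $\kappa(G_T)$; the former yields the ``b'' variants of partial amplification, and the latter the ``a'' variants. In the ``a'' sub-cases the entire un-identification (together with un-suppression, un-deletion, or un-contraction of edges at $z$) takes place inside a single face $f'_0\in F(G_T)$ (and, in case (iii-a), inside $f'_1$ as well); the skeleton $G_T$ is untouched, and only $\theta_T(f'_0)$ (resp.\ also $\theta_T(f'_1)$) is updated. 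In the ``b'' sub-cases I would split the template vertex $v\in V(G_T)$ along the angle occupied by $f'_0$; the anchor edge $e$ used in the split is either already an edge $v'z'\in E(G_T)$ corresponding to $vz\in E(H)$ (cases (i-b) and the instance of (ii-b) where the edge is visible), or else it is first revealed from the face filling, which is precisely what the ``reveal an edge'' primitive of (i-b)/(ii-b) provides. Case (iii-b) is analogous but additionally inserts a new vertex $z$ on a path between the split halves $v_2,v_4$, matching the degree-two vertex that gets contracted in Lemma~\ref{lemma-unredu}(iii).

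For the numerical side, I would invoke Lemma~\ref{SubgrCrit} on the disk bounded by $f_0$ (respectively $f_1$) to conclude that the subgraph of $G$ drawn there is $C$-critical for its boundary walk, and hence its census belongs to $\SS_{|f_0|}$ (resp.\ $\SS_{|f_1|}$). The structural modification outlined above enlarges $f'_0$ from length $k=|f^H_0|$ to length $k+2=|f_0|$, or, in the special case $k=4$, introduces a new size-$6$ entry in $\theta(f'_0)$; the final census update replaces the newly created $k+2$ by the elements of the census of the subgraph drawn in $f_0$, which is exactly the boosting operation. The $k=4$ case is absorbed by the $A=I$ branch of boosting, since $\SS_6=\{\emptyset\}$. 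Case (iii) applies this reasoning independently to $f_0$ and $f_1$, producing the two-face boosting demanded by (iii-a)/(iii-b).

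The main obstacle I anticipate is not conceptual but combinatorial bookkeeping. For each sub-case one must verify that the described template operation is well-defined: the parallel edge $e'$ introduced during a vertex split has to be routed through the correct angle so that the resulting $2$-face merges with $f'_0$ and not with some other face incident to $v$; the possibility that $z\in\kappa(V(G_T))$ or $z$ is inside a face must be handled separately; and one must check that no accidental triangles or extra multiple edges arise beyond those explicitly allowed by Lemma~\ref{lemma-unredu}. Because the definition of partial amplification was designed precisely to mirror Lemma~\ref{lemma-unredu}, no new geometric argument is needed once the case split is arranged: the template-level interpretations of revealing, splitting and boosting developed earlier in the section are directly applied in each sub-case to deliver the required partial amplification representing $G_2$.
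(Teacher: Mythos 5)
Your proposal takes essentially the same approach as the paper, which also reduces the problem to a case analysis on (i)/(ii)/(iii) of Lemma~\ref{lemma-unredu} crossed with whether $v$ lies in $V(G_T)$ (``a'' vs.\ ``b'' variants), passes through the intermediate graph $H'$, uses revealing/splitting to reconstruct the template skeleton, and uses boosting together with Lemma~\ref{SubgrCrit} to update the census for the content in $f_0$ (and $f_1$); the paper writes out only case (ii) and declares the others similar. One small slip: your parenthetical ``the instance of (ii-b) where the edge is visible'' cannot occur, since in case (ii) of Lemma~\ref{lemma-unredu} both edges between $v$ and $z$ are deleted from $H$, so neither can already lie in $G_T$ and the ``reveal'' primitive is always required there, exactly as (ii-b) is phrased.
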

\begin{proof}
We give the argument in the case (ii) from the statement of Lemma~\ref{lemma-unredu}, the arguments in the remaining
two cases are similar.

Let $H'$ be the graph obtained from $G_1$ by identifying $v_2$ with $v_4$ to a new vertex $v$ within $f_0$
and let $g$ be its $2$-face bounded by the edges from $v$ to $z$.  Let $g_0$ be the face of $H'$ corresponding
to $f_0$ and let $a_0$ be the angle of $g_0$ such that $g$ merges with $a_0$ when we split $v$ back to $v_2$ and $v_4$.

Without loss of generality, assume the homeomorphism showing that $H$ is represented by $T$ is the identity.
Let $f$ be the face of $H$ in which the edges between $v$ and $z$ are drawn in $H'$.
A template $T'$ representing $H'$ is obtained from $T$ by splitting inside $f$
and in case that $v\in V(G_T)$, additionally revealing the edges $e$ and $e'$ between $v$ and $z$.

If $v\in V(G_T)$, then let $f'$ be the face of $G_{T'}$ and $a$ its angle containing the angle $a_0$.
As described in more detail after the definition of the boosting operation, a template representing $G_2$
is obtained from $T'$ by splitting the vertex $v$ into two vertices $v_2$ and $v_4$ so that the $2$-face bounded
by $e$ and $e'$ merges with the angle $a$, and changing $\theta_{T'}(f')$ to a multiset obtained from it by boosting.
This matches the case (ii-b) from the definition of partial amplification.
If $v\not\in V(G_T)$, and thus $v$ is drawn inside $f$, we achieve the same effect just by boosting the multiset $\theta_{T'}(f')$,
matching the case (ii-a).
\end{proof}

\subsubsection*{Amplification}

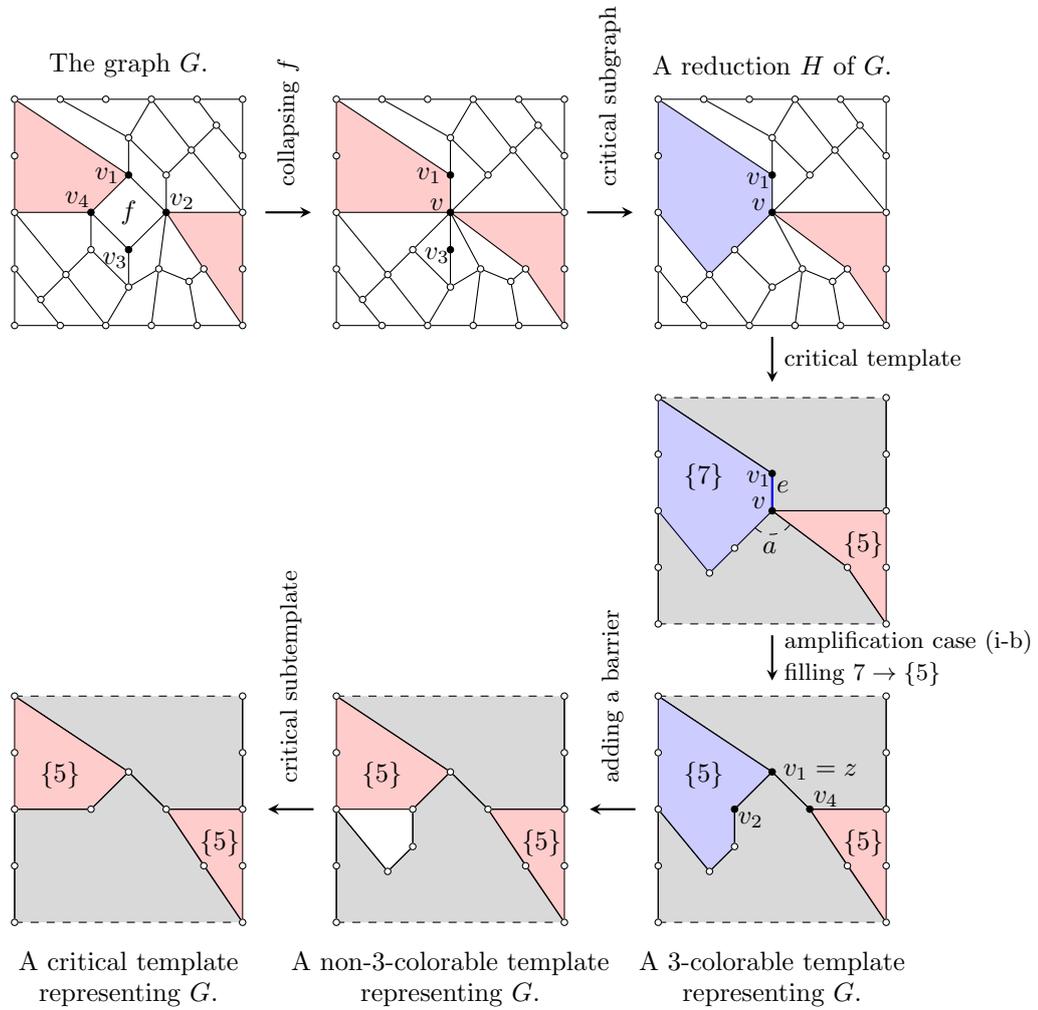
\begin{figure}
\newcommand{\vertices}{
\useasboundingbox (-10,-10) rectangle (10,10);
\coordinate (NW) at (-10,10);
\coordinate (N1) at (-6,10);
\coordinate (N2) at (-2,10); 
\coordinate (N3) at (2,10);
\coordinate (N4) at (6,10);
\coordinate (NE) at (10,10);
\coordinate (SW) at (-10,-10);
\coordinate (S1) at (-6,-10);
\coordinate (S2) at (-2,-10); 
\coordinate (S3) at (2,-10);
\coordinate (S4) at (6,-10);
\coordinate (SE) at (10,-10);
\coordinate (W1) at (-10,5);
\coordinate (W2) at (-10,0);
\coordinate (W3) at (-10,-5);
\coordinate (E1) at (10,5);
\coordinate (E2) at (10,0);
\coordinate (E3) at (10,-5);
\coordinate (FT) at (0,3.3);
\coordinate (FB) at (0,-3.3);
\coordinate (FL) at (-3.3,0);
\coordinate (FR) at (3.3,0);
\coordinate (FC) at (0,0);
\coordinate (A1) at (0,6.6);
\coordinate (A2) at (3.3,3.3);
\coordinate (A3) at (5.5,5.5);
\coordinate (A4) at (7.7,7.7);
\coordinate (B1) at (-7.7,-7.7);
\coordinate (B2) at (-5.5,-5.5);
\coordinate (B3) at (-3.3,-3.3);
\coordinate (B4) at (0,-6.6);
\coordinate (B5) at (2.65,-5);//(3.3,-3.3);
\coordinate (B6) at (5.3,-6.1);
\coordinate (B7) at (6.6,-5);
}
\begin{tikzpicture}[scale=0.15]
\vertices
\draw (NW)--(NE)--(SE)--(SW)--(NW);
\filldraw [color=red, opacity=0.2](NW)--(FT)--(FL)--(W2);
\draw (NW)--(FT)--(FL)--(W2);
\filldraw [color=red, opacity=0.2] (SE)--(B7)--(FR)--(E2);
\draw (SE)--(B7)--(FR)--(E2);
\draw (FT)--(FR)--(FB)--(FL);
\draw (N1)--(A1)--(A2) (FT)--(A1)--(N3)--(A3)--(E2) (FR)--(A2)--(A3)--(A4) (N4)--(A4)--(E1);
\draw (W2)--(B2)--(B3)--(FL) (W3)--(S1) (B1)--(B2)--(S2)--(B4)--(B5)--(B6)--(B7) (B3)--(B4)--(FB) (FR)--(B5)--(S3) (B6)--(S4);
\foreach \pt in {NW,N1,N2,N3,N4,NE,SW,S1,S2,S3,S4,SE,E1,E2,E3,W1,W2,W3,A1,A2,A3,A4,B1,B2,B3,B4,B5,B6,B7}{
	\filldraw (\pt) [color=white] circle (8pt);
	\draw (\pt) circle (8pt);
}
\foreach \pt in {FT,FB,FL,FR}{
	\filldraw (\pt) [color=black] circle (8pt);
}
\node at (FC) [] {$f$};
\node at (FT) [xshift=-8pt] {$v_1$};
\node at (FR) [xshift=6pt, yshift=4pt] {$v_2$};
\node at (FB) [xshift=-5pt, yshift=-4pt] {$v_3$};
\node at (FL) [xshift=-5pt, yshift=5pt] {$v_4$};
\node at (0,13) {The graph $G$.};
\draw [-stealth,thick] (12,0) to (16,0);
\node [label={[label distance=1mm]90:\rotatebox{90}{\small collapsing $f$}}] at (14,0){};
\end{tikzpicture}
\hspace{1cm}
\begin{tikzpicture}[scale=0.15]
\vertices
\draw (NW)--(NE)--(SE)--(SW)--(NW);
\filldraw [color=red, opacity=0.2](NW)--(FT)--(FC)--(W2);
\draw (NW)--(FT)--(FC)--(W2);
\filldraw [color=red, opacity=0.2] (SE)--(B7)--(FC)--(E2);
\draw (SE)--(B7)--(FC)--(E2);
\draw (FT)--(FC)--(FB);
\draw (N1)--(A1)--(A2) (FT)--(A1)--(N3)--(A3)--(E2) (FC)--(A2)--(A3)--(A4) (N4)--(A4)--(E1);
\draw (W2)--(B2)--(B3)--(FC) (W3)--(S1) (B1)--(B2)--(S2)--(B4)--(B5)--(B6)--(B7) (B3)--(B4)--(FB) (FC)--(B5)--(S3) (B6)--(S4);
\foreach \pt in {NW,N1,N2,N3,N4,NE,SW,S1,S2,S3,S4,SE,E1,E2,E3,W1,W2,W3,A1,A2,A3,A4,B1,B2,B3,B4,B5,B6,B7}{
	\filldraw (\pt) [color=white] circle (8pt);
	\draw (\pt) circle (8pt);
}
\foreach \pt in {FT,FB,FC}{
	\filldraw (\pt) [color=black] circle (8pt);
}
\node at (FT) [xshift=-8pt] {$v_1$};
\node at (FB) [xshift=-5pt, yshift=-2pt] {$v_3$};
\node at (FC) [xshift=-5pt, yshift=3pt] {$v$};
\draw [-stealth,thick] (12,0) to (16,0);
\node [label={[label distance=1mm]90:\rotatebox{90}{\small critical subgraph}}] at (14,0){};
\end{tikzpicture}
\hspace{1cm}
\begin{tikzpicture}[scale=0.15]
\vertices
\draw (NW)--(NE)--(SE)--(SW)--(NW);
\filldraw [color=blue, opacity=0.2](NW)--(FT)--(FC)--(B3)--(B2)--(W2);
\draw (NW)--(FT)--(FC);
\filldraw [color=red, opacity=0.2] (SE)--(B7)--(FC)--(E2);
\draw (SE)--(B7)--(FC)--(E2);
\draw (N1)--(A1)--(A2) (FT)--(A1)--(N3)--(A3)--(E2) (FC)--(A2)--(A3)--(A4) (N4)--(A4)--(E1);
\draw (W2)--(B2)--(B3)--(FC) (W3)--(S1) (B1)--(B2)--(S2)--(B4)--(B5)--(B6)--(B7) (B3)--(B4) (FC)--(B5)--(S3) (B6)--(S4);
\foreach \pt in {NW,N1,N2,N3,N4,NE,SW,S1,S2,S3,S4,SE,E1,E2,E3,W1,W2,W3,A1,A2,A3,A4,B1,B2,B3,B4,B5,B6,B7}{
	\filldraw (\pt) [color=white] circle (8pt);
	\draw (\pt) circle (8pt);
}
\foreach \pt in {FT,FC}{
	\filldraw (\pt) [color=black] circle (8pt);
}
\node at (FT) [xshift=-5pt, yshift=-2pt] {$v_1$};
\node at (FC) [xshift=-5pt, yshift=3pt] {$v$};

\node at (0,13) [align=left] {A reduction $H$ of $G$.};
\draw [-stealth,thick] (0,-11) to (0,-15);
\node at (0,-13) [right=1pt, align=left] {\small critical template};
\end{tikzpicture}
\newline
\vspace{0.5cm}
\newline
\begin{tikzpicture}[scale=0.15]
\useasboundingbox (-10,-10) rectangle (10,10);
\end{tikzpicture}
\hspace{1cm}
\begin{tikzpicture}[scale=0.15]
\useasboundingbox (-10,-10) rectangle (10,10);
\end{tikzpicture}
\hspace{1cm}
\begin{tikzpicture}[scale=0.15]
\vertices
\filldraw [fill=black!15!white] (NW)--(FT)--(FC)--(E2)--(NE);
\filldraw [fill=black!15!white] (SW)--(W2)--(B2)--(B3)--(FC)--(B7)--(SE);
\draw (NE)--(SE) (NW)--(SW);
\draw [dashed] (NW)--(NE) (SW)--(SE);
\filldraw [color=blue, opacity=0.2](NW)--(FT)--(FC)--(B3)--(B2)--(W2);
\draw (NW)--(FT)--(FC);
\filldraw [color=red, opacity=0.2] (SE)--(B7)--(FC)--(E2);
\draw (SE)--(B7)--(FC)--(E2);
\draw (FT)--(FC);
\draw [color=blue, thick] (FC)--(FT);
\foreach \pt in {NW,NE,SW,SE,E1,E2,E3,W1,W2,W3,B2,B3,B7}{
	\filldraw (\pt) [color=white] circle (8pt);
	\draw (\pt) circle (8pt);
}
\foreach \pt in {FT,FC}{
	\filldraw (\pt) [color=black] circle (8pt);
}
\node at (-6,3) {$\{7\}$};
\node at (8,-3) {$\{5\}$};
\node at (FT) [xshift=-5pt, yshift=-2pt] {$v_1$};
\node at (FC) [xshift=-5pt, yshift=3pt] {$v$};
\node at (FT) [xshift=4pt, yshift=-5pt] {$e$};
\draw [dashed] (-1.5,-1.5) arc (225:325:2cm);
\node at (FC) [xshift=-1pt, yshift=-14pt] {$a$};

\draw [-stealth,thick] (0,-11) to (0,-15);
\node at (0,-13) [right=1pt, align=left] {\small amplification case (i-b) \\ \small filling $7 \rightarrow \{5\}$};
\end{tikzpicture}
\newline
\vspace{0.5cm}
\newline
\begin{tikzpicture}[scale=0.15]
\vertices
\filldraw [fill=black!15!white] (NW)--(FT)--(FR)--(E2)--(NE);
\filldraw [fill=black!15!white] (SW)--(W2)--(FL)--(FT)--(FR)--(SE);
\draw (NE)--(SE) (NW)--(SW);
\draw [dashed] (NW)--(NE) (SW)--(SE);
\filldraw [color=red, opacity=0.2](NW)--(FT)--(FL)--(W2);
\draw (NW)--(FT)--(FL)--(W2);
\filldraw [color=red, opacity=0.2] (SE)--(B7)--(FR)--(E2);
\draw (SE)--(B7)--(FR)--(E2);
\draw (FT)--(FR);
\foreach \pt in {NW,NE,SW,SE,E1,E2,E3,W1,W2,W3,FT,FR,FL,B7}{
	\filldraw (\pt) [color=white] circle (8pt);
	\draw (\pt) circle (8pt);
}
\node at (-6,3) {$\{5\}$};
\node at (8,-3) {$\{5\}$};
\node at (0,-15) [align=center]{A critical template \\ representing $G$.};
\end{tikzpicture}
\hspace{1cm}
\begin{tikzpicture}[scale=0.15]
\vertices
\filldraw [fill=black!15!white] (NW)--(FT)--(FR)--(E2)--(NE);
\filldraw [fill=black!15!white] (SW)--(W2)--(B2)--(B3)--(FL)--(FT)--(FR)--(SE);
\draw (NE)--(SE) (NW)--(SW);
\draw [dashed] (NW)--(NE) (SW)--(SE);
\filldraw [color=red, opacity=0.2](NW)--(FT)--(FL)--(W2);
\draw (NW)--(FT)--(FL)--(W2);
\filldraw [color=red, opacity=0.2] (SE)--(B7)--(FR)--(E2);
\draw (SE)--(B7)--(FR)--(E2);
\draw (FT)--(FR);
\draw (W2)--(B2)--(B3)--(FL);
\foreach \pt in {NW,NE,SW,SE,E1,E2,E3,W1,W2,W3,FT,FR,FL,B2,B3,B7}{
	\filldraw (\pt) [color=white] circle (8pt);
	\draw (\pt) circle (8pt);
}
\node at (-6,3) {$\{5\}$};
\node at (8,-3) {$\{5\}$};
\node at (0,-15) [align=center]{A non-3-colorable template \\ representing $G$.};
\draw [-stealth,thick] (-12,0) to (-16,0);
\node [label={[label distance=1mm]90:\rotatebox{90}{\small critical subtemplate}}] at (-14,0){};
\end{tikzpicture}
\hspace{1cm}
\begin{tikzpicture}[scale=0.15]
\vertices
\filldraw [fill=black!15!white] (NW)--(FT)--(FR)--(E2)--(NE);
\filldraw [fill=black!15!white] (SW)--(W2)--(B2)--(B3)--(FL)--(FT)--(FR)--(SE);
\draw (NE)--(SE) (NW)--(SW);
\draw [dashed] (NW)--(NE) (SW)--(SE);
\filldraw [color=blue, opacity=0.2](NW)--(FT)--(FL)--(B3)--(B2)--(W2);
\draw (NW)--(FT)--(FL)--(B3)--(B2)--(W2);
\filldraw [color=red, opacity=0.2] (SE)--(B7)--(FR)--(E2);
\draw (SE)--(B7)--(FR)--(E2);
\draw (FT)--(FR);
\foreach \pt in {NW,NE,SW,SE,E1,E2,E3,W1,W2,W3,B2,B3,B7}{
	\filldraw (\pt) [color=white] circle (8pt);
	\draw (\pt) circle (8pt);
}
\foreach \pt in {FT,FR,FL}{
	\filldraw (\pt) [color=black] circle (8pt);
}
\node at (-6,3) {$\{5\}$};
\node at (8,-3) {$\{5\}$};
\node at (FT) [xshift=18pt] {$v_1=z$};
\node at (FR) [xshift=6pt, yshift=4pt] {$v_4$};
\node at (FL) [xshift=6pt, yshift=-4pt] {$v_2$};
\node at (0,-15) [align=center]{A 3-colorable template \\ representing $G$.};
\draw [-stealth,thick] (-12,0) to (-16,0);
\node [label={[label distance=1mm]90:\rotatebox{90}{\small adding a barrier}}] at (-14,0){};
\end{tikzpicture}
\newline
\vspace{1cm}
\caption{The process of deriving a critical template for a 4-critical triangle-free graph $G$ from a template for its reduction $H$.
Grey faces $g$ in templates are quadrangulated, $\theta(g)=\emptyset$.  Red and blue faces have length five and seven, respectively.}\label{fig-example}
\end{figure}

An \emph{amplification} of $T$ is a filling of a partial amplification of $T$.  As $G_2$ in Lemma~\ref{lemma-partex}
is a subgraph of $G$, a template representing $G$ is obtained from one representing $G_2$ by filling.
An example of the amplification operation is given on the right in Figure~\ref{fig-example}.

\begin{corollary}\label{cor-part}
Let $G$ be a $4$-critical triangle-free graph drawn in the torus and let $H$ be a $4$-critical subgraph of a graph
obtained from $G$ by collapsing a $4$-face.  If $H$ is triangle-free and $H$ is represented by a template $T$, then
$G$ is represented by an amplification of $T$.
\end{corollary}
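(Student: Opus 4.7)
The plan is to compose Lemma~\ref{lemma-partex} with one additional filling step, after which the statement becomes essentially a definitional unfolding of ``amplification = filling of a partial amplification''.

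First, I would apply Lemma~\ref{lemma-unredu} to $G$ and $H$ to obtain a $2$-cell embedded subgraph $G_1\subseteq G$ together with a path $v_2zv_4$ on the boundary of a face $f_0$ of $G_1$ (and, in case~(iii), an additional face $f_1$), where $f_0$ (and $f_1$) is not a face of $G$. Let $G_2$ be the subgraph of $G$ obtained from $G_1$ by adding all vertices and edges of $G$ drawn in $f_0$, and in $f_1$ in case~(iii). Lemma~\ref{lemma-partex} then delivers a partial amplification $T_1$ of $T$ representing $G_2$, via some homeomorphism $\kappa_1$ of the torus.

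Second, I would upgrade $T_1$ to a template $T_2$ representing $G$ by a single filling. For each face $f'$ of $G_2$ that is not a face of $G$, let $Q_{f'}$ be the subgraph of $G$ drawn in the closure of $f'$. Since $G$ is $4$-critical and $f'$ is not its face, Lemma~\ref{SubgrCrit} gives $Q_{f'}\in\mathcal{G}_{|f'|}$, so the census of $Q_{f'}$ belongs to $\SS_{|f'|}$. For each $f\in F(G_{T_1})$ I define $\theta_{T_2}(f)$ by replacing in $\theta_{T_1}(f)$ the entry corresponding to every face $f'$ of $G_2$ drawn inside $\kappa_1(f)$ either by $\{|f'|\}$, when $f'$ remains a face of $G$, or by the census of $Q_{f'}$, otherwise. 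This is precisely the filling operation applied to $\theta_{T_1}(f)$; simultaneously $\theta_{T_2}(f)$ is by construction the census of the set of faces of $G$ drawn inside $\kappa_1(f)$, so $T_2$ represents $G$.

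Hence $T_2$ is a filling of the partial amplification $T_1$ of $T$, which by definition makes $T_2$ an amplification of $T$. I do not foresee any real obstacle: the bulk of the work is hidden inside Lemma~\ref{lemma-partex}, and the only small verification is that each possible fate of an individual face of $G_2$ (preservation in $G$ versus subdivision by a $C$-critical subgraph with census in $\SS_{|f'|}$) corresponds to exactly one of the two options $\{\{|f'|\}\}\cup\SS_{|f'|}$ allowed in the definition of filling, which is immediate.
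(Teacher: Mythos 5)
Your proposal is correct and follows exactly the route the paper takes: apply Lemma~\ref{lemma-partex} to get a partial amplification $T_1$ representing $G_2\subseteq G$, then observe via Lemma~\ref{SubgrCrit} and the definition of the sets $\SS_i$ that passing from $G_2$ to its $4$-critical triangle-free supergraph $G$ is a filling of $T_1$, hence $G$ is represented by an amplification of $T$. The only detail you leave implicit---that a $4$-face of $G_2$ cannot be subdivided in $G$ (since $\SS_4=\emptyset$) so the census changes really are all captured by the filling operation---is also left implicit in the paper's one-line justification.
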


However, note that even if $T$ is not $3$-colorable, some of its amplifications can be $3$-colorable.  Next, we deal with this issue.

\subsection{Making a template non-3-colorable}

The following definitions are motivated by Lemma~\ref{lemma-disk}(i), essentially restating what an obstruction
to extendability of a $3$-coloring may look like from the point of view of templates. 
Consider a template $T$ and let $f$ be a face of $G_T$.  A \emph{strut} of $f$ is
a directed path $P$ whose endpoints $s$ and $t$ are vertices of $G_T$ in the boundary of $f$, and the rest of $P$ is drawn inside $f$.
For a strut $P$, let $R(P)$ denote the subwalk of the clockwise boundary walk of $f$ starting in $t$ and ending in $s$,
and let $r(P)$ denote the part of $f$ bounded by the cycle formed by the concatenation of $P$ and $R(P)$.  Consider a proper $3$-coloring $\varphi$ of $T$
and a winding number assignment $n$ for $\theta_T(f)$ summing to $\omega_T(\varphi)$.  An \emph{$f$-barrier} for $(\varphi,n)$
is a pair $(P,I)$, where $P$ is a strut of $f$ and $I$ is a multisubset of $\theta_T(f)$ such that
$$\Bigl|\omega_\varphi(R(P))-\sum_{i\in I} n(i)\Bigr|>|E(P)|.$$
A set $B$ of $f$-barriers is \emph{drawing-consistent} if the intersection of the drawings of any two of the struts is a union of vertices and edges;
in such a case, let $\bigcup B$ denote the graph consisting of the union of the struts.
We say that $B$ \emph{blocks $\varphi$} if for every winding number assignment $n$ for $\theta_T(f)$ summing to $\omega_T(\varphi)$,
$B$ contains an $f$-barrier for $(\varphi,n)$.

Given a system $\BB=\{B_f:f\in F(G_T)\}$, where $B_f$ is a drawing-consistent set of $f$-barriers for each face $f\in F(G_T)$, 
a \emph{realization} of $\BB$ is a template $T'$ such that $$G_{T'}=G_T\cup\bigcup_{f\in F(G_T)}\bigcup B_f,$$
$T$ is a subtemplate of $T'$, i.e., every face $f\in F(G_T)$ satisfies
$$\theta_T(f)=\bigcup_{h\in F(G_T'),h\subseteq f} \theta_{T'}(h),$$
and for every $f\in F(G_T)$ and $(P,I)\in B_f$,
$$I=\bigcup_{h\in F(G_{T'}),h\subseteq r(P)} \theta_{T'}(h).$$
The last condition expresses that in the realization, the values from $I$ are exactly those assigned to the faces of the
realization contained in $r(P)$.  For this reason, a system $\BB$ does not necessarily have a realization
even when it is drawing-consistent, since it may not be possible to choose $\theta_{T'}$ so that the last condition holds.
On the other hand, it may also be possible to choose $\theta_{T'}$ (and thus a realization) in several different ways.
If $\BB$ has a realization, we say that it is \emph{consistent}.  For a proper $3$-coloring $\varphi$ of $T$, we say
that $\BB$ \emph{blocks $\varphi$} if there exists $f\in F(G_T)$ such that $B_f$ blocks $\varphi$.
The following claim is essentially clear from the definitions and Lemma~\ref{lemma-disk}.

\begin{lemma}\label{lemma-supnocol}
Let $T$ be a relevant template and let $\BB=\{B_f:f\in F(G_T)\}$ be a consistent system of sets of barriers.
Let $T'$ be a realization of $\BB$.  If $\BB$ blocks every proper $3$-coloring of $T$, then $T'$ is not
$3$-colorable.
\end{lemma}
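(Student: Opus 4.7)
The plan is to assume for contradiction that $T'$ admits a proper $3$-coloring $\psi$ and derive a violation of one of the barrier inequalities. Let $\varphi$ be the restriction of $\psi$ to $V(G_T)$; since $G_T\subseteq G_{T'}$, this is automatically a proper $3$-coloring of $G_T$, and the first step is to upgrade this to a proper $3$-coloring of the template $T$.

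To do so, fix a face $f\in F(G_T)$. The boundary walk of $f$ in $G_T$ uses only vertices and edges of $G_T$, so $\omega_\varphi(f)=\omega_\psi(f)$, and Observation~\ref{obs-winsum} (applied in $G_{T'}$ with subgraph $G_T$) gives $\omega_\psi(f)=\sum_{h\in F(G_{T'}),\,h\subseteq f}\omega_\psi(h)$. For each such $h$, since $\psi$ is a proper $3$-coloring of $T'$, there is a winding number assignment $n_h$ for $\theta_{T'}(h)$ summing to $\omega_\psi(h)$; because $T$ is a subtemplate of $T'$ we have $\theta_T(f)=\bigcup_{h\subseteq f}\theta_{T'}(h)$, so combining the $n_h$ yields a winding number assignment $n^*_f$ for $\theta_T(f)$ summing to $\omega_\varphi(f)$. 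Hence $\omega_\varphi(f)\in\omega(\theta_T(f))$, confirming that $\varphi$ is a proper $3$-coloring of $T$.

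The hypothesis now gives a face $f\in F(G_T)$ such that $B_f$ blocks $\varphi$, and we apply this to the particular winding number assignment $n^*_f$ constructed above: there must exist $(P,I)\in B_f$ with
$$\Bigl|\omega_\varphi(R(P))-\sum_{i\in I}n^*_f(i)\Bigr|>|E(P)|.$$
The realization condition forces $I=\bigcup_{h\in F(G_{T'}),\,h\subseteq r(P)}\theta_{T'}(h)$, so from the construction of $n^*_f$,
$$\sum_{i\in I}n^*_f(i)=\sum_{h\in F(G_{T'}),\,h\subseteq r(P)}\omega_\psi(h).$$
The region $r(P)$ is a face of the subgraph $G_T\cup P$, bounded by the closed walk consisting of $P$ followed by $R(P)$; applying Observation~\ref{obs-winsum} once more, the right-hand side equals $\omega_\psi(P)+\omega_\psi(R(P))=\omega_\psi(P)+\omega_\varphi(R(P))$. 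Substituting turns the barrier inequality into $|\omega_\psi(P)|>|E(P)|$, contradicting Observation~\ref{obs-winvalues}.

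The only real obstacle is keeping the winding number bookkeeping tight across the nested decompositions (faces of $G_T$ into faces of $G_{T'}$, and then $r(P)$ into a union of faces of $G_{T'}$); this is exactly what the two conditions in the definition of a realization, namely $\theta_T(f)=\bigcup_{h\subseteq f}\theta_{T'}(h)$ and $I=\bigcup_{h\subseteq r(P)}\theta_{T'}(h)$, were engineered to supply. One also has to verify that the clockwise traversal of $\partial r(P)$ agrees with the global orientation used in Observation~\ref{obs-winsum}, but this is immediate once the torus is oriented since $R(P)$ is by definition a clockwise sub-walk of $\partial f$ and $r(P)\subseteq f$.
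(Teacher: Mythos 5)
Your proof is correct and follows essentially the same route as the paper's: restrict $\psi$ to $G_T$, assemble the per-face winding number assignments of $T'$ into one for each face of $T$ using the subtemplate condition, invoke the blocking hypothesis, and unwind the realization condition plus Observation~\ref{obs-winsum} to land on $|\omega_\psi(P)|>|E(P)|$. One small nit: you cite Observation~\ref{obs-winvalues} for the final contradiction, but that observation is stated for \emph{closed} walks, whereas $P$ is an open path; the needed bound $|\omega_\psi(P)|\le |E(P)|$ is of course immediate (each edge contributes $\pm 1$), which is exactly how the paper phrases it.
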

\begin{proof}
Suppose for a contradiction $T'$ has a proper $3$-coloring $\varphi'$, and let $\varphi$ be the restriction
of $\varphi'$ to $V(G_T)$.  Consider any face $f$ of $G_T$, and let $F_f=\{g\in F(G_{T'}):g\subseteq f\}$.
Let $n_f:\bigcup_{g\in F_f}\theta_{T'}(g)\to\mathbb{Z}$ be a function whose restriction to $\theta_{T'}(g)$
is a winding number assignment for $\theta_{T'}(g)$ summing to $\omega_{\varphi'}(g)$ for every $g\in F_f$;
such a function $n_f$ exists, since $\varphi'$ is a proper $3$-coloring of $T'$.
Since $T'$ is a realization of $T$, Observation~\ref{obs-winsum} implies
\begin{align*}
\omega_\varphi(f)&=\omega_{\varphi'}(f)=\sum_{g\in F_f} \omega_{\varphi'}(g)\\
&=\sum_{g\in F_f}\sum_{i\in \theta_{T'}(g)} n_f(i)=\sum_{i\in\theta_T(f)} n_f(i),
\end{align*}
and thus $n_f$ is a winding number assignment for $\theta_T(f)$ summing to $\omega_\varphi(f)$.
Since this holds for every $f\in F(G_T)$, we conclude that $\varphi$ is a proper $3$-coloring of $T$.

Therefore, $\BB$ blocks $\varphi$, and thus for some $f\in F(G_T)$, $B_f$ contains an $f$-barrier $(P,I)$ for $(\varphi, n_f)$.
Since $\varphi'$ is a proper $3$-coloring of $T'$ and by Observation~\ref{obs-winsum}, we have
\begin{align*}
\omega_{\varphi'}(P+R(P))&=\sum_{g\in F(G_{T'}),g\subseteq r(P)} \omega_{\varphi'}(g)\\
&=\sum_{g\in F(G_{T'}),g\subseteq r(P)}\sum_{i\in \theta_{T'}(g)} n_f(i)\\
&=\sum_{i\in I} n_f(i)
\end{align*}
Since $|\omega_{\varphi'}(u,v)|\le 1$ for any adjacent $u,v\in V(G_{T'})$, we have $|\omega_{\varphi'}(P)|\le|E(P)|$.
Consequently
\begin{align*}
\Bigl|\omega_\varphi(R(P))-\sum_{i\in I} n(i)\Bigr|&=\Bigl|\omega_{\varphi'}(R(P))-\sum_{i\in I} n(i)\Bigr|\\
&\le \Bigl|\omega_{\varphi'}(P+R(P))-\sum_{i\in I} n(i)\Bigr|+|E(P)|=|E(P)|,
\end{align*}
which is a contradiction, since $(P,I)$ is an $f$-barrier for $(\varphi,n_f)$.
\end{proof}

More interestingly, a converse holds as well.

\begin{lemma}\label{lemma-barrier}
Let $T$ be a relevant template.  If a $4$-critical triangle-free graph $G$ is represented by $T$, then
there exists a consistent system $\BB$ of sets of barriers which blocks every proper $3$-coloring of $T$
such that $G$ is represented by a realization of $\BB$.
\end{lemma}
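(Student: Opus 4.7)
The plan is to enumerate the finitely many proper $3$-colorings of $T$ and, for each, build a set of barriers (drawn inside $G$) in one face of $G_T$ that blocks that coloring; the union of these sets will form the required system $\BB$.

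Since $G$ is $4$-critical it has no $3$-coloring, so for every proper $3$-coloring $\varphi$ of $T$ the coloring $\varphi\circ\kappa^{-1}$ does not extend to $G$.  By a gluing argument, some face $f_\varphi\in F(G_T)$ must be such that $\varphi\circ\kappa^{-1}$ restricted to $\partial\kappa(f_\varphi)$ does not extend to the subgraph $G^{f_\varphi}:=G\cap\overline{\kappa(f_\varphi)}$; by Lemma~\ref{SubgrCrit} this subgraph is critical with respect to its boundary, so Lemma~\ref{lemma-disk} applies.  For every winding number assignment $n$ summing to $\omega_\varphi(f_\varphi)$, we therefore obtain either a path obstruction (case (i)) or a cycle obstruction (case (ii)).  A case-(i) path is immediately a strut $P\subseteq G$, and, paired with $I$ equal to the census of $G$-faces in the region cut off, it is an $f_\varphi$-barrier witnessing the obstruction.

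The main technical obstacle is case (ii), since a barrier must be a strut, not a cycle.  For a contractible cycle $C'$ in $G^{f_\varphi}$ meeting $\partial\kappa(f_\varphi)$ in at most one point and with $\bigl|\sum_{g\subseteq r(C')} n(g)\bigr|>|C'|$, I would use connectivity and the $C'$-criticality of the subgraph inside $C'$ (which is non-trivial, since a single face would force $|\omega_\varphi(C')|\le|C'|$, contradicting the hypothesis) to choose a path in $G^{f_\varphi}$ from $\partial\kappa(f_\varphi)$ to a vertex $u$ of $C'$.  Splitting $C'$ at $u$ into two arcs and concatenating each with this path yields two candidate struts; a triangle-inequality argument on winding numbers guarantees at least one of them satisfies the barrier inequality (after shortening to an actual path via the $2$-cell structure of the face).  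As there are only finitely many winding number assignments, this procedure produces a finite set of barriers blocking $\varphi$.

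Taking the union over all $\varphi$ yields the system $\BB$.  Drawing-consistency of each $B_f$ is automatic because its struts are paths in $G$, and any two meet only in vertices and edges of $G$.  To verify that $\BB$ is realized, set $G_{T'}=G_T\cup\bigcup_f\bigcup B_f$ and, for each face $h$ of $G_{T'}$, let $\theta_{T'}(h)$ be the census of the $G$-faces drawn inside $h$.  Both identities $\theta_T(f)=\bigcup_{h\subseteq f}\theta_{T'}(h)$ and $I=\bigcup_{h\subseteq r(P)}\theta_{T'}(h)$ then follow by additivity of censuses over disjoint subregions, so $T'$ is a realization of $\BB$; moreover $G$ is represented by $T'$ by construction, completing the proof.
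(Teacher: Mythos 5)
Your overall structure is the same as the paper's: fix the identity representation, observe that every proper $3$-coloring $\varphi$ of $T$ fails to extend to $G$ in some face $f$, apply Lemma~\ref{lemma-disk}, read off barriers from case (i), and collect. The gap is entirely in your treatment of case (ii), which is the crux.

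The paper does not convert a cycle obstruction into a strut; it proves case (ii) \emph{cannot occur} when $T$ is a relevant template. That argument is not optional: it uses Lemma~\ref{SubgrCrit} to place the census $I$ of the interior of $C$ into $\SS_{|C|}$, then uses Lemma~\ref{lemma-sset} ($\max I \le |C|-2$, the explicit lists $\SS_8,\SS_9$) together with the bound $|n(i)|\le i$ to rule out $|I|=1$ and $I\in\{\{5,5\},\{5,6\},\{5,7\},\{5,5,5\}\}$, forcing $I$ to equal the \emph{entire} multiset $\bigcup_h\theta_T(h)$ (here relevance is essential); finally Observation~\ref{obs-winsum} gives $\sum_{i\in I}n(i)=\sum_h\omega_\varphi(h)=0$, contradicting $|\sum_{i\in I}n(i)|>|C|$. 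Your proposal never invokes the relevance of $T$ or the structure of $\SS_k$, so you have no way to reach this contradiction.

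Your replacement — run a path $Q$ from $\partial\kappa(f)$ to a vertex $u$ of $C'$, ``split $C'$ at $u$ into two arcs,'' and appeal to a triangle inequality — does not go through. First, a single vertex does not split a cycle into two arcs, and even if $C'$ meets $\partial\kappa(f)$ in one point you still do not obtain a walk between two distinct boundary vertices; a strut needs two boundary endpoints. Second, and more fundamentally, even if you could manufacture such a walk $P$ (say $Q$ plus an arc of $C'$ plus another path back to the boundary), the barrier inequality $\bigl|\omega_\varphi(R(P))-\sum_{i\in I}n(i)\bigr|>|E(P)|$ has $|E(P)|$ on the right, and $Q$ can be arbitrarily long relative to $|C'|$: the cycle may sit deep inside $f$, far from any boundary vertex of $G_T$. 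Nothing in your triangle-inequality sketch bounds $|E(Q)|$, so the inequality can fail. The paper sidesteps this entirely by eliminating case (ii) before any strut has to be built. To repair your argument you would need to reproduce the census/relevance analysis; as written, the proposal has a real hole at exactly the step the paper spends most of its effort on.
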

\begin{proof}
Without loss of generality, we can assume $T$ represents $G$ via the identity homeomorphism, and thus $G_T\subseteq G$.
For a face $f\in F(G_T)$, let $F_f$ denote the set of faces of $G$ contained in $f$, and let us fix a bijection $\gamma_f$
mapping each face $g\in F_f$ of length at least $5$ to an element of $\theta_T(f)$ of value $|g|$.

Consider a proper $3$-coloring $\varphi$ of $G_T$.  Since $G$ is not $3$-colorable, there exists $f\in F(G_T)$ such that
$\varphi$ does not extend to a $3$-coloring of the subgraph of $G$ drawn in $f$.
Consider any winding number assignment $n$ for $\theta_T(f)$ summing to $\omega_\varphi(f)$.  For $g\in F_f$,
let $n'(g)=n(\gamma_f(g))$ if $|g|>4$ and $n'(g)=0$ if $|g|=4$; then $n'$ is a winding number assignment for $F_f$ summing to $\omega_\varphi(f)$.
We now apply Lemma~\ref{lemma-disk} to $\varphi$ and $n'$.

Suppose first that (ii) holds; let $C$ and $F'$ be as in the statement, and let $I=\gamma_f(F')$, so that
$\Bigl|\sum_{i\in I} n(i)\Bigr|>|C|$.  Since $G$ is $4$-critical and triangle-free, Lemma~\ref{SubgrCrit}
implies $I\in\SS_{|C|}$.  If $I=\{i\}$, then since $n$ is a winding number assignment, we have
$|C|=|n(i)|\le i=\max I$, contradicting Lemma~\ref{lemma-sset}.
Consequently $|I|\ge 2$, and thus Lemma~\ref{lemma-sset} implies $|C|\ge 8$.
Since $n(5)\le 3$ and $\Bigl|\sum_{i\in I} n(i)\Bigr|>|C|\ge 8$, we have $I\neq\{5,5\}$, and thus Lemma~\ref{lemma-sset} implies $|C|\ge 9$.
Since $n(6)\le 6$ and $n(7)\le 3$, the same argument now gives $I\neq \{5,6\}$, $I\neq \{5,7\}$, and $I\neq\{5,5,5\}$.
Let $J=\bigcup_{h\in F(G_T)} \theta_T(h)$; since $T$ is relevant and $I\subseteq J$, we conclude that $J$ is either $\{5,5,5,5\}$
or $\{5,5,6\}$ and $I=J$.
Therefore,
$$\sum_{i\in I} n(i)=\sum_{h\in F(G_T)} \sum_{i\in \theta_T(h)} n(i)=\sum_{h\in F(G_T)}\omega_\varphi(h)=0$$
by Observation~\ref{obs-winsum}.  Therefore, $\Bigl|\sum_{i\in I} n(i)\Bigr|=0<|C|$, which is a contradiction.

Therefore, (i) holds; let $P$ be as in the statement (with $F'=\{g\in F(G):g\subseteq r(P)\}$)
and let $I=\gamma_f(F')$.  Then $(P,I)$ an $f$-barrier for $(\varphi,n)$.  Collecting all such barriers for all proper $3$-colorings $\varphi$ of $T$
and for all choices of $n$ gives us a consistent system $\BB$ of sets of barriers which blocks every proper $3$-coloring of $T$,
with a realization representing $G$.
\end{proof}

A system $\BB$ of sets of barriers is \emph{$T$-minimal} if $\BB$ blocks all proper $3$-colorings of $T$, but removing any barrier from any of the sets results in
a sytem that no longer blocks all proper $3$-colorings of $T$.  Note that a $T$-minimal system has bounded size, and thus there are (up to homeomorphism)
only finitely many $T$-minimal systems and their realizations.  Let us remark that a realization of a $T$-minimal set of barriers
may still contain a proper non-$3$-colorable subtemplate.

A template $T'$ is \emph{critical} if $T'$ is not $3$-colorable, but all proper subtemplates of $T'$ are $3$-colorable.
Observe that if a template is $3$-colorable, then all its subtemplates are also $3$-colorable.  Hence, a non-$3$-colorable template $T''$
has a critical subtemplate, which can be obtained from $T''$ by repeatedly hiding edges whose removal does not cause the template to become $3$-colorable.

We can now combine all the results.  We say that a template $T_3$ is \emph{grown from a template $T$}
if there exists an amplification $T_1$ of $T$, a $T_1$-minimal consistent system $\BB$ of sets of barriers,
and a realization $T_2$ of $\BB$ such that $T_3$ is a relevant critical subtemplate of $T_2$.
See the bottom part of Figure~\ref{fig-example} for an illustration.

\begin{theorem}\label{thm-aplif}
Let $G$ be a $4$-critical triangle-free graph drawn in the torus and let $H$ be a $4$-critical subgraph of a graph
obtained from $G$ by collapsing a $4$-face.  If $H$ is triangle-free and $H$ is represented by a template $T$, then
$G$ is represented by a template grown from $T$.
\end{theorem}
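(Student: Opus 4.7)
First, I would apply Corollary~\ref{cor-part} directly to $G$ and $H$ to conclude that $G$ is represented by some amplification $T_1$ of $T$. Since $G$ is a $4$-critical triangle-free graph represented by $T_1$, the remark following the definition of relevance (combining Lemma~\ref{SubgrCrit} with Theorem~\ref{thm-centor}) implies that $T_1$ is a relevant template. I then invoke Lemma~\ref{lemma-barrier} for $G$ and $T_1$ to obtain a consistent system $\BB_0=\{B_f:f\in F(G_{T_1})\}$ of sets of barriers that blocks every proper $3$-coloring of $T_1$, together with a realization $T_2^0$ of $\BB_0$ that represents $G$.

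Next, I would shrink $\BB_0$ to a $T_1$-minimal system $\BB$ while maintaining alongside it a realization representing $G$. Concretely, I repeatedly remove barriers $(P,I)$ from the current system as long as the remaining system still blocks every proper $3$-coloring of $T_1$; in tandem, from the current realization I hide the edges of $P$ that lie in no other remaining strut, updating $\theta$ by the union rule in the definition of the hiding operation. The main obstacle is to verify that this pair of reductions is well-defined: I must check that the template obtained after hiding is still a realization of the shrunken system of barriers, and that hiding edges preserves the fact that the realization represents $G$. The first point follows because the drawing-consistency of the remaining struts ensures that, for every remaining barrier $(P',I')$, the region $r(P')$ is still a union of faces of the updated realization, so the required union-of-$\theta$ identities carry over by associativity of multiset union. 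The second point is the direct observation from the definition of subtemplates that subtemplates inherit all represented graphs. Iterating until no further barrier can be removed yields a $T_1$-minimal consistent system $\BB$ with a realization $T_2$ that still represents $G$.

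Finally, Lemma~\ref{lemma-supnocol} applied to $T_1$, $\BB$, and $T_2$ yields that $T_2$ is not $3$-colorable. I then produce the target template by hiding edges of $T_2$ one by one while preserving non-$3$-colorability; since $G_{T_2}$ is finite, the process terminates at a critical subtemplate $T_3$ of $T_2$. As $T_3$ is a subtemplate of $T_2$, it still represents $G$, and since $G$ is $4$-critical triangle-free, $T_3$ is relevant by the same remark used for $T_1$. Hence $T_3$ is a relevant critical subtemplate of a realization $T_2$ of a $T_1$-minimal consistent system $\BB$ of sets of barriers over an amplification $T_1$ of $T$, which is precisely the definition of $T_3$ being grown from $T$. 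Therefore $G$ is represented by a template grown from $T$, as required.
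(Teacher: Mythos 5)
Your proof follows essentially the same route as the paper: Corollary~\ref{cor-part} to obtain an amplification $T_1$ representing $G$, relevance of $T_1$ via Theorem~\ref{thm-centor} and Lemma~\ref{SubgrCrit}, Lemma~\ref{lemma-barrier} for a consistent blocking system with a realization representing $G$, Lemma~\ref{lemma-supnocol} for non-$3$-colorability of that realization, then extraction of a relevant critical subtemplate. The one place you add substance is in explicitly shrinking the blocking system to a $T_1$-minimal one while maintaining a realization representing $G$ (by hiding strut edges no longer used and checking the realization conditions survive)---a step the paper glosses over by reading a $T_1$-minimal system directly off Lemma~\ref{lemma-barrier}, which the lemma as stated does not literally guarantee, so your version is a bit more careful.
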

\begin{proof}
By Corollary~\ref{cor-part}, we can choose an amplification $T_1$ of $T$ representing $G$.
Since $G$ is $4$-critical and triangle-free, Theorem~\ref{thm-centor} and Lemma~\ref{SubgrCrit} imply the template $T_1$ is relevant.
By Lemma~\ref{lemma-barrier}, there exists a $T_1$-minimal consistent system $\BB$ of sets of barriers
such that $G$ is represented by a realization $T_2$ of $\BB$.
By Lemma~\ref{lemma-supnocol}, $T_2$ is not $3$-colorable, and thus it has a critical subtemplate $T_3$, which also represents $G$.
\end{proof}

\section{A complete description of 4-critical triangle-free graphs drawn in the torus}

Recall a $4$-critical triangle-free graph drawn in the torus is \emph{irreducible} if none of its $4$-faces can be collapsed
so that the resulting graph is triangle-free. In another paper, we have identified all irreducible graphs.

\begin{theorem}[Dvořák and Pekárek~\cite{torirr}]\label{thm-irr}
There are exactly four non-homeomorphic irreducible graphs drawn in the torus:
$I_4, I_5, I_7^a, I_7^b$, as depicted in Figure~\ref{fig-irreducibles}. 
\end{theorem}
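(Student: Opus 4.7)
My plan is to combine the detailed structural information already provided by Theorem~\ref{thm-centor} with the local consequences of irreducibility, reduce to a finite enumeration problem, and then dispatch it by case analysis (potentially computer-assisted).

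First, Theorem~\ref{thm-centor} gives us substantial starting data: $G$ is $2$-connected, $2$-cell embedded in the torus with representativity at least $2$, and its census $\CC(G)$ lies in the finite set $\{\emptyset,\{5,5\},\{5,5,5,5\},\{5,5,6\},\{5,7\}\}$. So the number of non-$4$-faces is at most four, and all remaining faces are $4$-faces. The only missing piece is to bound $|V(G)|$; once that is done, the classification reduces to a finite search.

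Next, I would unpack the irreducibility hypothesis locally. For every $4$-face $f=v_1v_2v_3v_4$, neither identification $v_1\sim v_3$ nor $v_2\sim v_4$ may be performed without creating a triangle, so each opposite pair must already share a common neighbor outside $f$. Equivalently, in every $4$-face of $G$ there exist two ``diagonal witnesses'' yielding $2$-paths $v_1wv_3$ and $v_2w'v_4$. This is a strong combinatorial constraint: essentially every quadrilateral face sits in a small local pattern of bounded complexity, and triangle-freeness forces the witnesses $w,w'$ to be distinct from each other and from the corners of neighboring $4$-faces. Then I would set up a discharging argument. Each vertex gets initial charge $\deg(v)-4$ and each face gets $|f|-4$, so by Euler's formula on the torus the total charge equals $0$. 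The only faces with positive charge are the $\le 4$ non-$4$-faces dictated by the census (contributing at most a small explicit constant), and positive vertex charge arises only from vertices of degree $\ge 5$. Discharging rules that transfer charge from high-degree vertices and large faces along the witness $2$-paths of each $4$-face should yield, after routine but careful case analysis, a uniform upper bound on $|V(G)|$.

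Once $|V(G)|\le N$ for an explicit $N$, the classification is reduced to enumerating $2$-cell triangle-free toroidal embeddings of size at most $N$ with one of the five allowed censuses, testing $4$-criticality (equivalently, checking that every proper subgraph is $3$-colorable), and verifying irreducibility by trying both opposite identifications at each $4$-face. This step is naturally delegated to a computer search; one then checks that the output is exactly $\{I_4, I_5, I_7^a, I_7^b\}$ up to homeomorphism.

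The main obstacle will be making the discharging tight enough to give a practical bound $N$. The witnesses at adjacent $4$-faces can overlap in delicate ways, and the presence of the $5$-, $6$-, or $7$-faces in the non-trivial censuses forces separate sub-cases where the discharging rules interact with the non-$4$-faces. I also expect the representativity-$2$ case (as opposed to larger face-width) to require a bit of extra attention, since short non-contractible cycles can force unusual local configurations that the discharging must handle uniformly.
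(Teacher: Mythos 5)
The statement you are trying to prove is cited from Dvořák and Pekárek~\cite{torirr}; the present paper contains no proof of Theorem~\ref{thm-irr}, so there is no in-paper argument to compare against. That said, there are two concrete problems with your sketch.

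First, your local reading of irreducibility is wrong. Identifying $v_1$ with $v_3$ in a $4$-face $v_1v_2v_3v_4$ creates a triangle if and only if there is an \emph{edge} $ab$ with $a\in N(v_1)\setminus N(v_3)$ and $b\in N(v_3)\setminus N(v_1)$ (both necessarily outside $\{v_2,v_4\}$), i.e., a length-$3$ path $v_1abv_3$; it is \emph{not} equivalent to $v_1$ and $v_3$ sharing a common neighbor outside $f$. A common neighbor produces a parallel edge (a digon) after identification, not a $3$-cycle, and triangle-freeness of $G$ already forbids $a,b$ lying both in $N(v_1)$ or both in $N(v_3)$. So the ``diagonal witness $2$-paths'' you base your discharging on do not exist in general.

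Second, and more fundamentally, the proposed discharging cannot by itself bound $|V(G)|$. With charges $\deg(v)-4$ and $|f|-4$, Euler's formula on the torus forces the total charge to be exactly $0$, and a quadrangulation in which every vertex has degree $4$ has all charges identically zero regardless of its size (the $m\times n$ grid on the torus is such a graph for all $m,n$). The census bound from Theorem~\ref{thm-centor} controls only the at most four non-$4$-faces and leaves an arbitrarily large zero-charge interior entirely unconstrained, so there is nothing for your rules to discharge. The only thing that can exclude a large interior is irreducibility, but it is not naturally a local positive/negative charge condition: it asserts existence of certain edges near each $4$-face, and converting that into a contradiction for large graphs is exactly the substance a proof would have to supply rather than defer to ``routine but careful case analysis.'' The high-level skeleton (bounded census plus irreducibility should bound the size, then enumerate by computer) is plausible, but both the extracted local condition and the discharging mechanism built on it would need to be replaced before this could be called a proof.
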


\begin{figure}[h]
\centering
\begin{subfigure}{0.24\textwidth}
\includegraphics[width=90pt]{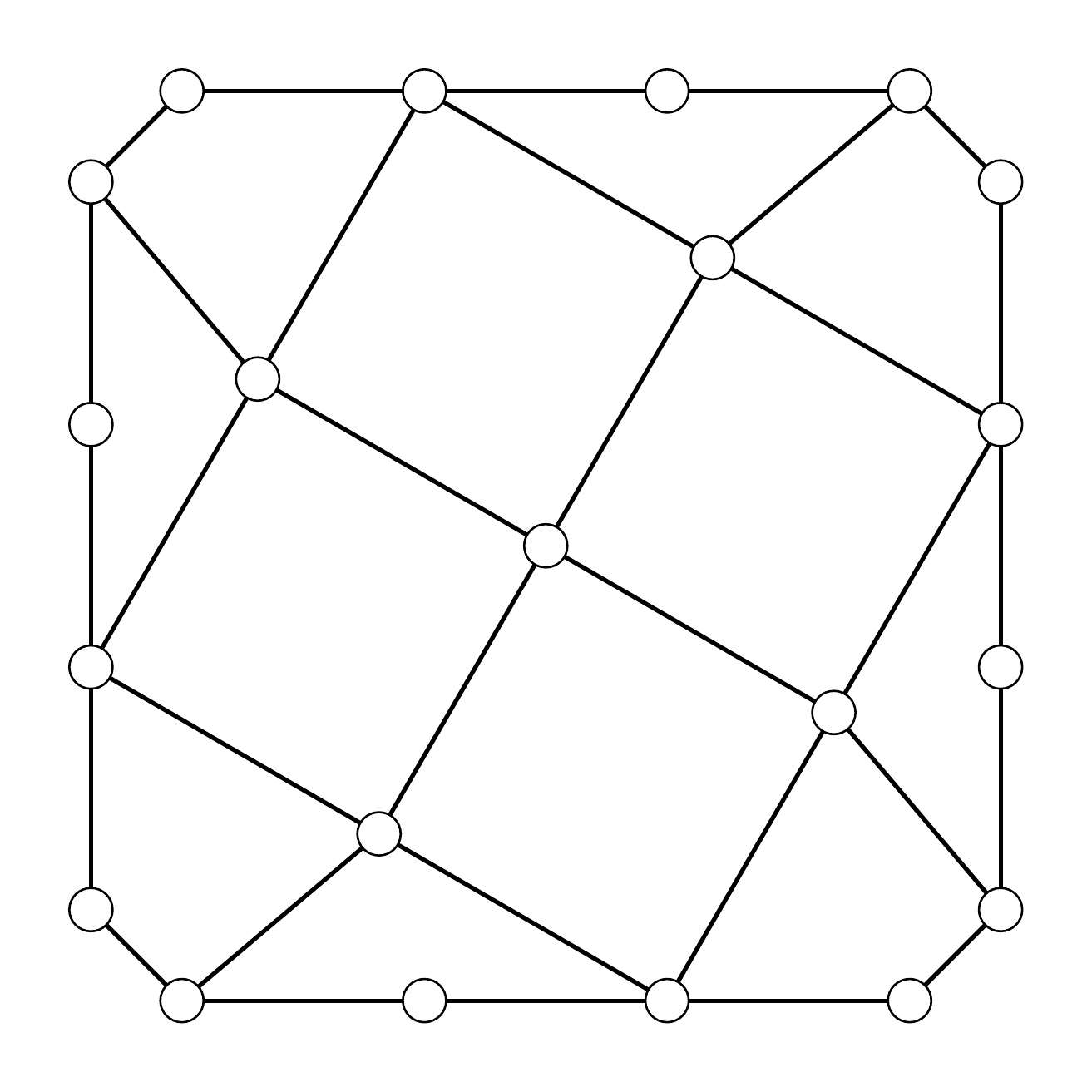}
\caption{$I_4$}
\end{subfigure}
\begin{subfigure}{0.24\textwidth}
\includegraphics[width=90pt]{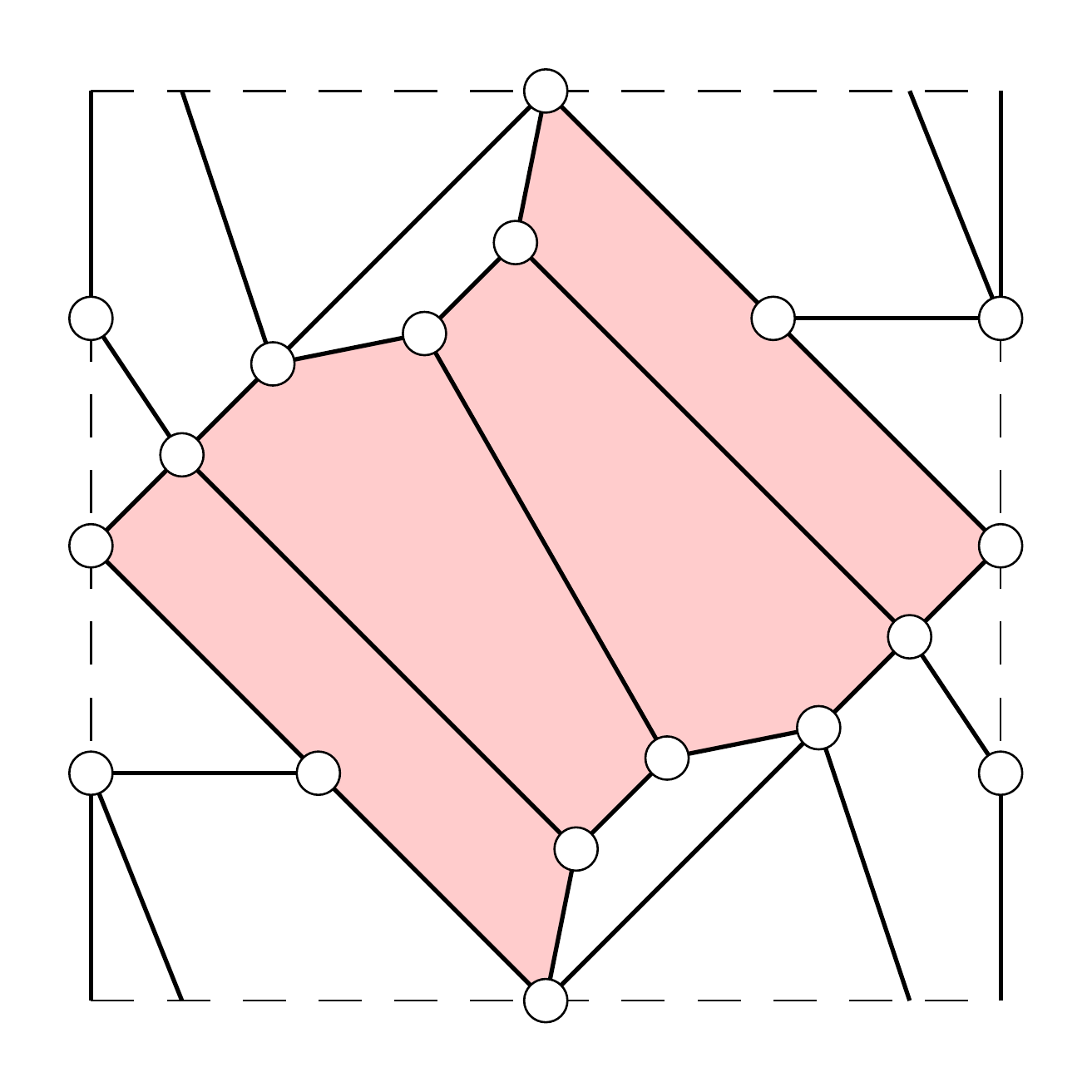}
\caption{$I_5$}
\end{subfigure}
\begin{subfigure}{0.24\textwidth}
\includegraphics[width=90pt]{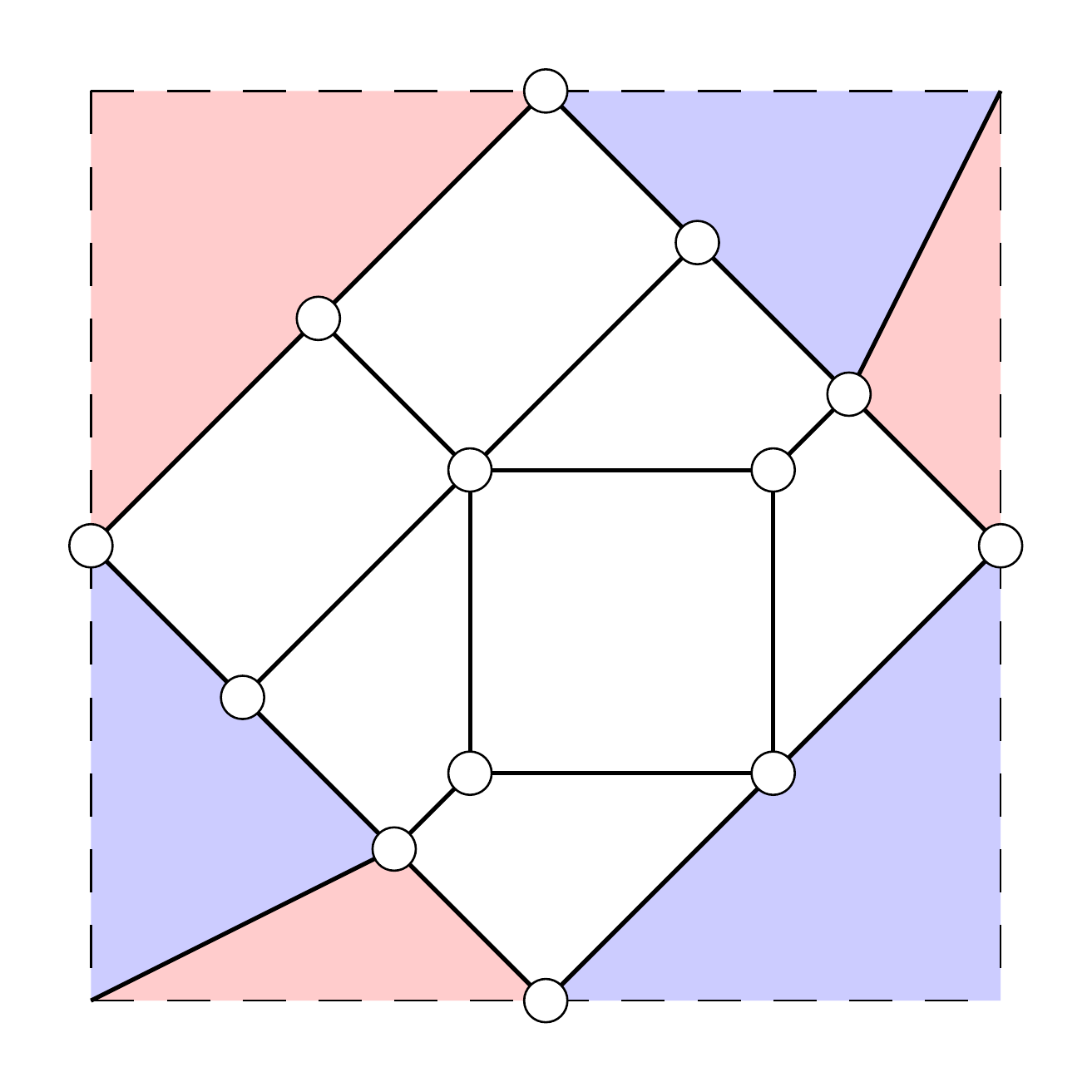}
\caption{$I_7^{a}$}
\end{subfigure}
\begin{subfigure}{0.24\textwidth}
\includegraphics[width=90pt]{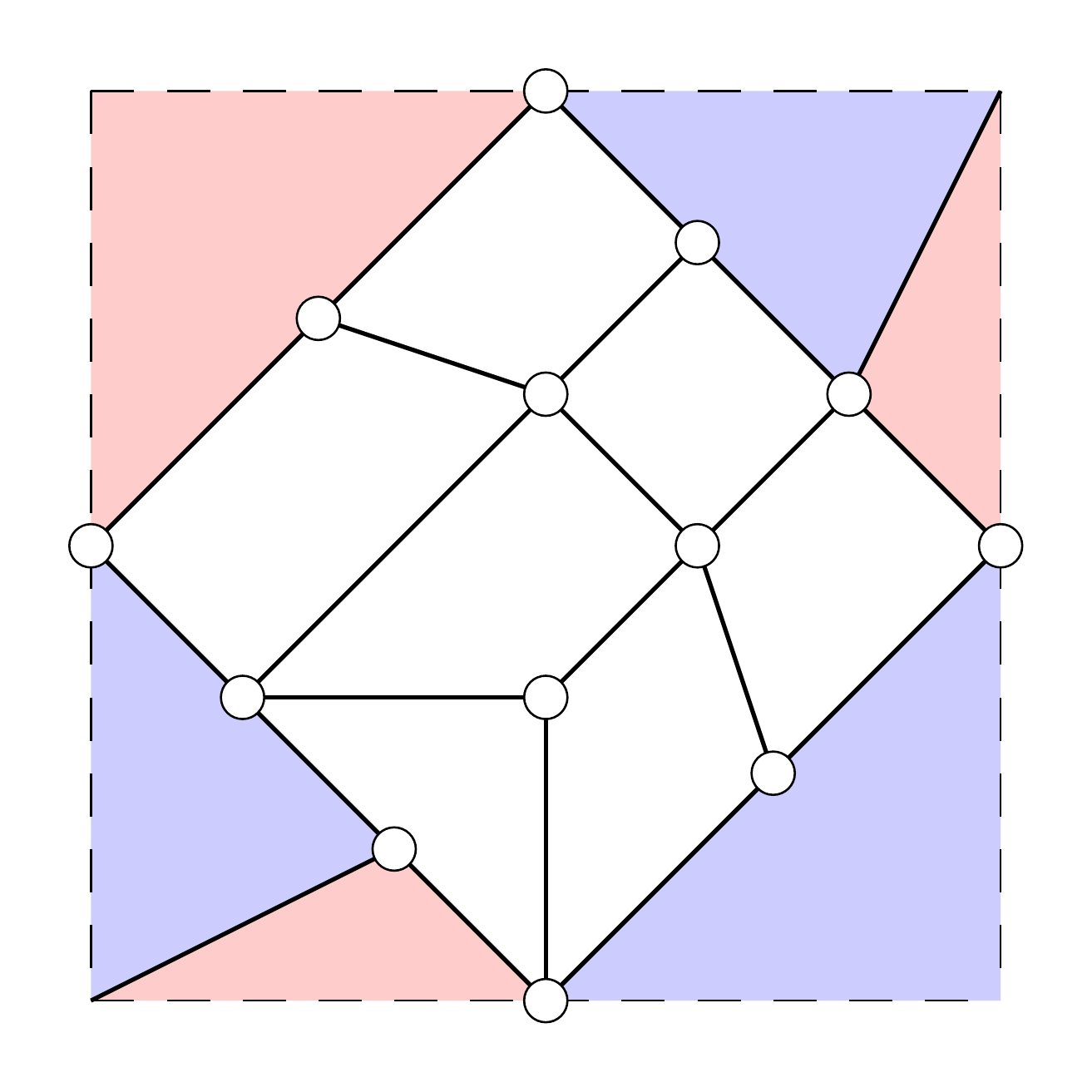}
\caption{$I_7^{b}$}
\end{subfigure}
\caption{Irreducible 4-critical graphs drawn on torus. Red and blue faces are $5-$ and $7-$ faces, respectively. The torus
is obtained by gluing the top edge of the picture with the bottom bottom one and the left one with the right one.}
\label{fig-irreducibles}
\end{figure}

For a set $\TT$ of templates and a graph $G$, $G$ is \emph{represented} by
$\TT$ if there exists $T \in \TT$ such that $G$ is represented by $T$. A set $\TT$ of
templates $\TT$ is \emph{closed under growing} if for every $T \in \TT$,
all templates grown from $T$ belong to $\TT$.
A set $\TT$ of templates is \emph{total} if $\TT$ is a set of relevant critical templates
such that the graphs $I_4$, $I_5$, $I_7^a$, and $I_7^b$ are represented by $\TT$ and $\TT$ is closed under
growing.

\begin{corollary}\label{thm-totalrepr}
If $\TT$ is a total set of templates, then
\begin{itemize}
\item every $4$-critical triangle-free graph drawn in the torus is represented by $\TT$, and
\item no graph represented by $\TT$ is $3$-colorable.
\end{itemize}
\end{corollary}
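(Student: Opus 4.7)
The plan is to prove the two bullets separately, with the first one by induction on $|V(G)|$ and the second one essentially immediate.

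For the second bullet, suppose $G$ is represented by some $T \in \TT$. Since $\TT$ is a total set, $T$ is a critical template, hence by definition $T$ is not $3$-colorable. Corollary~\ref{cor-col} then gives that $G$ is not $3$-colorable, which handles this bullet directly.

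For the first bullet, I would induct on the number of vertices of $G$, where $G$ is a $4$-critical triangle-free graph drawn in the torus. For the base case, if $G$ is irreducible, then Theorem~\ref{thm-irr} tells us that $G$ is homeomorphic to one of $I_4$, $I_5$, $I_7^a$, or $I_7^b$, and by the assumption that $\TT$ is total these four graphs are represented by $\TT$. For the inductive step, suppose $G$ is not irreducible, so that some $4$-face $f$ of $G$ can be collapsed to produce a triangle-free graph $G'$. Since collapsing cannot decrease the chromatic number, $G'$ is not $3$-colorable, so it contains a $4$-critical subgraph $H$; since $G'$ is triangle-free, $H$ is triangle-free as well, and so $H$ is a reduction of $G$ in the sense of the paper. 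The vertex count strictly drops in the reduction (two vertices are identified, and possibly further vertices and edges are removed when passing to $H \subseteq G'$), so $|V(H)| < |V(G)|$ and the inductive hypothesis applies: $H$ is represented by some template $T \in \TT$. Theorem~\ref{thm-aplif} now produces a template $T_3$ grown from $T$ such that $G$ is represented by $T_3$, and because $\TT$ is closed under growing, $T_3 \in \TT$, finishing the induction.

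The main obstacle I anticipate is mostly bookkeeping: verifying that the reduction $H$ really satisfies the hypotheses needed to apply induction (triangle-free, $4$-critical, drawn in the torus, strictly fewer vertices). All of this is immediate from the definition of reduction and from the fact that collapsing a $4$-face strictly reduces the vertex count. Apart from that, the argument is essentially a direct consolidation of Theorem~\ref{thm-irr} (providing the base case), Theorem~\ref{thm-aplif} (providing the inductive step), and the closure and criticality properties baked into the definition of ``total set of templates''.
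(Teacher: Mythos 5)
Your proposal is correct and follows essentially the same approach as the paper: the paper phrases the first bullet as a minimal-counterexample argument rather than explicit induction on $|V(G)|$, but the logical content (Theorem~\ref{thm-irr} to handle the irreducible case, Theorem~\ref{thm-aplif} plus closure under growing for the reducible case, and Corollary~\ref{cor-col} for the second bullet) is identical.
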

\begin{proof}
For the first claim, the proof is illustrated in Figure~\ref{fig-example}.
Suppose for a contradiction $G$ is a $4$-critical triangle-free graph drawn in the torus
and not represented by $\TT$ with the smallest number of vertices.  Since all irreducible graphs are represented by $\TT$,
$G$ contains a $4$-face that can be collapsed without creating a triangle; let $H$ be the corresponding reduction of $G$.
Since $H$ is $4$-critical, triangle-free, and $|V(H)|<|V(G)|$, the minimality of $G$ implies $H$ is represented by
a template $T\in \TT$. By Theorem \ref{thm-aplif}, $G$ is represented by a template grown from $T$.
This is a contradiction, since $\TT$ is closed under growing.

The second claim follows from Corollary~\ref{cor-col}, since all templates in $\TT$ are critical, and thus not $3$-colorable.
\end{proof}

Therefore, to finish the proof of Theorem~\ref{thm-main}, it remains to prove the following.

\begin{claim}\label{clm-total}
There exists a total set of direct templates of size $186$.
\end{claim}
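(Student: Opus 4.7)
The proof will be by computer-assisted enumeration, using Corollary~\ref{thm-totalrepr} and Theorem~\ref{thm-aplif} as the conceptual backbone. The plan is to start from templates representing the four irreducible graphs, then repeatedly close under the growing operation until a fixed point is reached, and finally verify that the resulting set has size $186$ and consists of direct templates.

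\textbf{Initialization.} For each irreducible graph $I\in\{I_4,I_5,I_7^a,I_7^b\}$ from Theorem~\ref{thm-irr}, I define a direct template $T_I$ by setting $G_{T_I}=I$, $\theta_{T_I}(f)=\emptyset$ when $|f|=4$, and $\theta_{T_I}(f)=\{|f|\}$ otherwise. These four templates are relevant, direct, and represent the corresponding irreducible graphs via the identity homeomorphism. They are critical because, by Theorem~\ref{thm-irr}, each $I$ is $4$-critical and therefore not $3$-colorable, and any proper subtemplate hides an edge, which contradicts edge-criticality of $I$ (after adjusting the face values to remain relevant).

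\textbf{Fixed-point enumeration.} Starting with $\TT_0=\{T_{I_4},T_{I_5},T_{I_7^a},T_{I_7^b}\}$, I iteratively build $\TT_{i+1}$ from $\TT_i$ by adding all direct relevant critical templates grown from some $T\in\TT_i$ that are not already in $\TT_i$ (up to a homeomorphism of the torus). Concretely, for each $T\in\TT_i$ I enumerate:
\begin{itemize}
\item all partial amplifications of $T$ of the six types (i-a)--(iii-b), which is finite since each type performs a bounded local modification with only finitely many combinatorial choices;
\item all fillings of each resulting template, which is finite by Lemma~\ref{lemma-sset};
\item of the amplifications obtained, only those that are relevant in the sense of the definition preceding Section~\ref{sec-winding};
\item for each relevant amplification $T_1$, all $T_1$-minimal consistent systems $\BB$ of sets of barriers, testing $3$-colorability of $T_1$ and of candidate minimal blocking systems by brute-force enumeration of proper $3$-colorings of $G_{T_1}$ together with the winding-number conditions (as remarked after Corollary~\ref{cor-col});
\item for each realization $T_2$ of $\BB$, all relevant critical subtemplates, computed by repeatedly hiding edges whose removal does not restore $3$-colorability and then checking relevance.
\end{itemize}
By Corollary~\ref{thm-totalrepr} applied in the contrapositive, and by the construction, the set $\TT=\bigcup_i\TT_i$ (once it stabilizes) satisfies the defining properties of a total set of templates.

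\textbf{Termination and directness.} Termination is guaranteed because every relevant template $T$ has $\bigcup_{f\in F(G_T)}\theta_T(f)\in\{\emptyset,\{5,5\},\{5,5,5,5\},\{5,5,6\},\{5,7\}\}$ by Theorem~\ref{thm-centor}, so by Lemma~\ref{lemma-sset} every face of $G_T$ has length at most $7$, and the total number of non-$4$-faces is bounded; combined with Euler's formula this bounds $|V(G_T)|$ and $|E(G_T)|$, leaving only finitely many templates up to homeomorphism of the torus. Directness of every output is maintained by design: the initialization produces direct templates, and in the growing step I retain only those relevant critical subtemplates $T_3$ in which every face with $\theta_{T_3}(f)\neq\emptyset$ is proper, which can be enforced during the extraction of the critical subtemplate from $T_2$.

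\textbf{Main obstacle.} The principal difficulty is not conceptual but computational and bookkeeping-related: implementing faithful isomorphism testing for labelled embeddings in the torus (so that the same template is not counted more than once), and correctly enumerating the minimal blocking barrier systems of Lemma~\ref{lemma-barrier}, whose interplay with the winding-number conditions of Section~\ref{sec-winding} is intricate. Once the enumeration is executed and verified, the output is the asserted set of $186$ direct templates; the counts and explicit list are recorded in the Appendix and in the computer-readable data referenced in the Introduction. This set, by construction, satisfies all conditions of a total set of templates, proving Claim~\ref{clm-total} and, via Corollary~\ref{thm-totalrepr}, Theorem~\ref{thm-main}.
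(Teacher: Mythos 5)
Your overall strategy matches the paper's: seed with templates for the four irreducible graphs, close under growing via enumeration of amplifications, barrier systems, realizations, and critical subtemplates, and verify by computer. However, two of your claims contradict what the paper itself is careful to say, and both represent genuine gaps in the reasoning.

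\textbf{Directness.} You write that directness ``is maintained by design'' by retaining only direct critical subtemplates when extracting them from a realization $T_2$. This cannot be enforced. The paper states explicitly: \emph{``This claim is not at all evident, and the fact that we can get away with only having direct templates came as a bit of a surprise to us. Note that the operations of filling, boosting, or hiding edges may (and typically do) turn a direct template into a non-direct one. It is fortuitous that adding struts when turning a template into a non-3-colorable one counteracts these effects.''} If some realization $T_2$ arising in the process had no direct critical subtemplate, filtering would leave that branch empty and the resulting set would fail to be closed under growing (violating the hypothesis of Corollary~\ref{thm-totalrepr}). That this never occurs is an a posteriori observation of the computer search, not a property you can bake into the algorithm.

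\textbf{Termination.} You argue that since every relevant template has its non-4-faces bounded in number and length, Euler's formula bounds $|V(G_T)|$ and $|E(G_T)|$. This is false: Euler's formula places no bound on the number of 4-faces, and quadrangulations of the torus are arbitrarily large. A relevant template can have an enormous $G_T$ with $\theta_T$ vanishing on all but a handful of faces. The paper acknowledges this directly: \emph{``it is not a priori clear that this enumeration process succeeds; [\ldots] the general bounds from~[trfree4, cylgen-part3] on the sizes of templates are large, giving no guarantee that the enumeration can be accomplished in practice. Fortunately, neither of these concerns materialized.''} So termination, like directness, is a fortunate empirical fact confirmed by running the search, not something you can deduce in advance. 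Your proposal presents both points as provable features of the construction, which misstates the logical status of the claim.
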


This claim is not at all evident, and the fact that we can get away with
only having direct templates came as a bit of a surprise to us. Note that the operations
of filling, boosting, or hiding edges may (and typically do) turn a direct template into a non-direct one. It is fortuitous that adding struts when turning a template into a non-3-colorable
one counteracts these effects.
We do not give a firm theoretical basis to confirm Claim~\ref{clm-total},
but we explicitly construct the set $\TT$ using a computer search.
We give more details in the next section.

\subsection{Generating a total set $\TT$}

In the remainder of this paper, let us describe the process that generates a total set~$\TT$. 
Clearly, it suffices to start with critical templates representing $I_4$, $I_5$, $I_7^a$, and $I_7^b$,
and then close the set under growing. To turn a specific graph $G$ into a template $T$ representing
it, we just set $G_T = G$ and
$$\theta_T(f)=\begin{cases}
\{|f|\}&\text{ if $|f|\ge 5$}\\
\emptyset&\text{ if $|f|=4$}
\end{cases}$$
for each face $f$ of $G$.  We then need a procedure that takes a general
template $T$ that is not $3$-colorable and finds a critical subtemplate. This
can be achieved by iteratively going over all edges of $T$, each time testing
by brute-force whether hiding the edge allows existence of a proper $3$-coloring
of the resulting subtemplate. Once all edges have this property, the
obtained template is critical. Note that although multiple critical
subtemplates may exist, any of them can be selected and so this greedy approach is
sufficient. 

We then perform the following procedure for every template $T$ we add to the set $\TT$:
\begin{itemize}
\item[(1)] Generate all amplifications of $T$,
\item[(2)] for each amplification $T'$ of $T$, find all (up to homeomorphism) $T'$-minimal consistent systems of sets of barriers
that block all proper $3$-colorings of $T'$, and their realizations,
\item[(3)] for each such realization, find a critical subtemplate and add it to $\TT$
if a homeomorphic template is not already present.
\end{itemize}
The step (3) is accomplished using the criticalization procedure from the previous paragraph.
The step (1) is entirely straightforward, following the definition of
partial amplification and amplification and trying all possible choices
(their number generally turns out to be reasonably small, on the order of hundreds).
Let us remark that by Theorem~\ref{thm-centor}, the largest value appearing in the $\theta_T$ sets is $7$,
and thus to perform boosting (and filling), the knowledge of the sets $\SS_k$ for $k\le 9$ given by
Lemma~\ref{lemma-sset} is sufficient.

The step (2) is the most complicated and time-consuming and deserves a more detailed description.
It is achieved by the following recursive procedure, taking as a parameter
a set $\KK$ of $3$-colorings of $T'$ yet to block and a realization $T''$ of a system
of sets of barriers already added to $T'$.

\medskip

$\mathtt{expand\_to\_noncolorable}(\KK,T'')$:
\vspace*{-2mm}
\begin{itemize}
\item If $\KK=\emptyset$, add $\TT''$ to the output and return.
\item Select a coloring $\varphi\in\KK$.
\item For each face $f\in F(G_{T'})$:
\begin{itemize}
\item Let $n_1$, \ldots, $n_k$ be all winding number assignments
for $\theta_{T'}(f)$ summing to $\omega_\varphi(f)$.
\item Generate all sequences $s_1$, \ldots, $s_k$
and $T_0$, $T_1$, \ldots, $T_k$ such that
\begin{itemize}
\item[($\star$)] $T_0=T''$, and for $i=1,\ldots,k$,
\item[($\star\star$)] $s_i=(P_i,I_i)$ is an $f$-barrier for $(\varphi,n_i)$ whose intersection with $G_{T_{i-1}}$ consists
of vertices and edges, and a relevant template $T_i$ is obtained from $T_{i-1}$ by adding $P_i$
to $G_{T_{i-1}}$ and for each face $g\in F(G_{T_{i-1}})$ such that $g\subseteq f$,
redistributing the elements of $\theta_{T_{i-1}}(g)$ among the faces into
which $P_i$ splits $g$ so that exactly the elements belonging to $I_i$ end up in $r(P_i)$.
\end{itemize}
\item For each template $\overline{T}$ appearing as the last
element of such a sequence, let $\overline{\KK}$ be the subset
of $\KK$ consisting of the colorings that extend to $\overline{T}$,
and call $\mathtt{expand\_to\_noncolorable}(\overline{\KK},\overline{T})$.
\end{itemize}
\end{itemize}
The step (2) then amounts to calling $\mathtt{expand\_to\_noncolorable}(\KK,T')$
for the set $\KK$ of all proper $3$-colorings of $T'$ obtained by brute-force.
Let us remark that $\varphi$ does not belong to the set $\overline{\KK}$
by the choice of the $f$-barriers $s_1$, \ldots, $s_k$, and thus the recursion
is guaranteed to terminate.  We do not bother checking that the systems of sets
of barriers obtained in this procedure are $T'$-minimal, as this turned out to
only slow things down---all $T'$-minimal systems are guaranteed to be among those
produced, though, which is sufficient.

Note that the depth of recursion of $\mathtt{expand\_to\_noncolorable}$ may be quite
large (worst case up to the size of $\KK$), and additionally in 
($\star\star$) we need to exhaustively go through a potentially large number of cases.
In practice, the situation is not so bad. Often, adding the $f$-barriers eliminates
many colorings in addition to $\varphi$, limiting the depth of the recursion.
Due to the census limitations given by Theorem~\ref{thm-centor}, the number of
winding number assignments for $\theta_{T'}(f)$ summing to a specific value
is small, we always have $k\le 3$.  Furthermore, once we add a few barriers to a face,
the number of ways to select a barrier in ($\star\star$) satisfying the intersection and realization
conditions is limited.  Minor further optimizations (choosing the coloring $\varphi$
so that as few choices in ($\star\star$) are possible over all faces $f$, caching to ensure
we do not process the same template more than once) are sufficient to ensure
that the whole enumeration process finishes within few hours on standard hardware.

Each of the authors wrote an implementation of the described procedure, to decrease
the chance of programming errors.  Both implementations can be found at \href{https://iuuk.mff.cuni.cz/~rakdver/torus/}{\url{https://iuuk.mff.cuni.cz/~rakdver/torus/}}.

\bibliographystyle{acm}
\bibliography{bibliography}

\section*{Appendix}

Here we list the templates from the total set $\TT$.  The drawing on torus is obtained
by gluing the left and the right side to form a cylinder, then gluing the two boundary
cycles of the cylinder, as indicated by the vertex labels.  The grey faces are quadrangulated,
while the white, red, green, and blue faces have lengths four, five, six, and seven,
respectively.

\includegraphics[page=1,width=\textwidth]{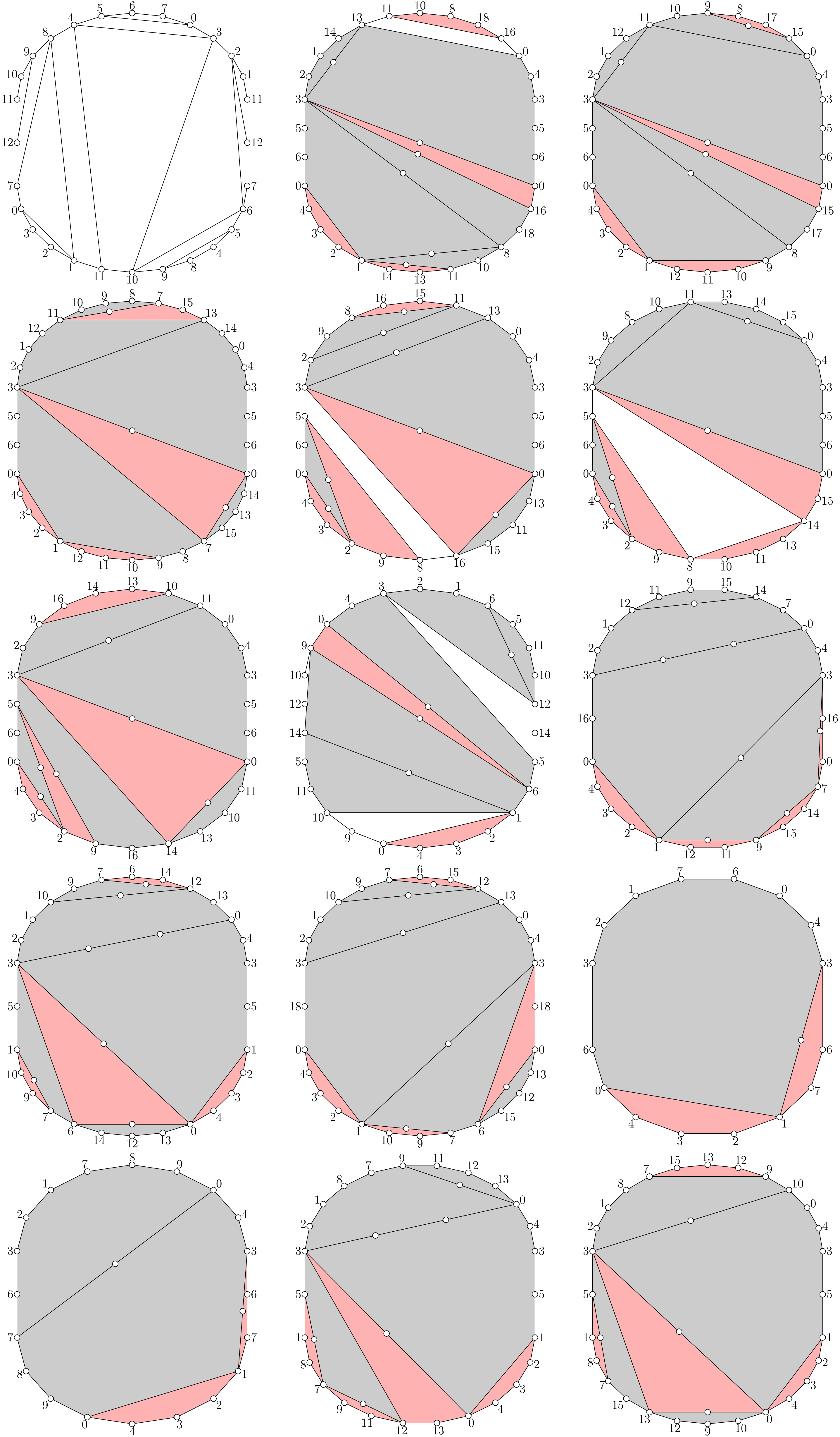}

\includegraphics[page=2,width=\textwidth]{fig-critical.pdf}

\includegraphics[page=3,width=\textwidth]{fig-critical.pdf}

\includegraphics[page=4,width=\textwidth]{fig-critical.pdf}

\includegraphics[page=5,width=\textwidth]{fig-critical.pdf}

\includegraphics[page=6,width=\textwidth]{fig-critical.pdf}

\includegraphics[page=7,width=\textwidth]{fig-critical.pdf}

\includegraphics[page=8,width=\textwidth]{fig-critical.pdf}

\includegraphics[page=9,width=\textwidth]{fig-critical.pdf}

\includegraphics[page=10,width=\textwidth]{fig-critical.pdf}

\includegraphics[page=11,width=\textwidth]{fig-critical.pdf}

\includegraphics[page=12,width=\textwidth]{fig-critical.pdf}

\includegraphics[page=13,width=\textwidth]{fig-critical.pdf}

\end{document}